\numberwithin{equation}{section}
\theoremstyle{plain}
\newtheorem{prop}{Proposition}[section]
\newtheorem{coro}[prop]{Corollary}
\newtheorem{lemm}[prop]{Lemma}
\newtheorem{theorem}[prop]{Theorem}
\theoremstyle{definition}
\renewcommand\aa{a}
\newcommand\bb{b}
\newcommand\ff{f}
\renewcommand\gg{g}
\newcounter{ITEM}
\newcommand\ITEM[1]{\setcounter{ITEM}{#1}\leavevmode\hbox{\rm(\roman{ITEM})}}
\newcommand\mA{\mathcal{A}}
\newcommand\mP{\mathcal{P}}
\newcommand\uu{u}
\newcommand\xx{x}
\renewcommand\xi[1]{x_{i_{#1}}}
\newcommand\yy{y}
\newcommand\zz{z}
\title{An Anick type wild automorphism of free Poisson algebras}
\author{Ivan Shestakov$^{\ddag}$}
\address{I.S., Instituto de Matem\'{a}tica e Estat\'{i}stica, Universidade de S\~{a}o
Paulo, Brazil; Sobolev Institute of Mathematics, Novosibirsk, Russia.}
\email{shestak@ime.usp.br}
\author{Zerui Zhang$^*$}
\address{Z.Z., School of Mathematical Sciences, South China Normal University, Guangzhou 510631, P. R. China}
\email{\small zeruizhang@scnu.edu.cn}
\thanks{${}^{\ddag}$ Supported by grants FAPESP 2018/23690-6, and CNPq  304313/2019-0}
\thanks{${}^*$ Supported by the NNSF of China (Grant No.12101248) and by the
Guangdong Basic and Applied Basic Research Foundation (Grant No. 2024A1515013122)}
\thanks{${}^*$  Corresponding author}
\keywords{automorphism; Poisson algebra; Jacobian matrix}
\subjclass[2010]{08A35, 17A30, 17A50}
\begin{document}

\begin{abstract}

We construct an Anick type wild automorphism ~$\delta$ in a 3-generated free Poisson algebra which  induces a tame automorphism in a 3-generated polynomial algebra.  We also show that~$\delta$ is stably tame.  \\
\begin{center}
  Dedicated to the memory of professor V.A.Roman'kov
\end{center}
\end{abstract}

\maketitle

\section{Introduction}\label{Intro}
We recall that a \emph{Poisson algebra} is a vector space~$\mP$ over a field~$k$ endowed with two bilinear operations:  a multiplication denoted by~$\xx\otimes \yy\mapsto \xx\cdot \yy$ and a Poisson bracket denoted by~$\xx\otimes \yy\mapsto [\xx,\yy]$,  such that~$(\mP, \cdot)$ is a commutative associative algebra, $(\mP, [-,-])$ is a Lie algebra, and~$\mP$ satisfies the \emph{Leibniz identity}
$$[\xx, \yy \cdot \zz]= \yy \cdot [\xx, \zz]+ \zz\cdot[\xx, \yy].$$
A Poisson algebra~$\mP$ is called \emph{unital} if $(\mP,\cdot)$ is unital. In this article, we always assume that the Poisson algebra under consideration is unital.
And we usually omit the operation~$\cdot$ when no confusion arises.

It is clear that polynomial algebras are Poisson algebras with trivial Poisson brackets. On the other hand, assume that $\mathcal{L}$ is the free Lie algebra generated by an arbitrary set $X$ and assume that~$Y$ is a linear basis of~$\mathcal{L}$. Then the Poisson symmetric algebra of~$\mathcal{L}$, namely, the polynomial algebra with free generating set~$Y$, is the free Poisson algebra generated by~$X$~\cite{Sh93}. Since research on automorphisms of polynomial algebras is an interesting subject, it is natural to study automorphisms of free Poisson algebras.

Let us now review some essential terminology. Let~$A_n$ be an arbitrary free algebra with free generating set~$X:=\{x_1,\dots, x_n\}$ and denote by~$\mathsf{Aut}(A_n)$ the automorphism group of~$A_n$. We denote an automorphism of the form
$$\sigma(i,\alpha,f):=(x_1,\dots, x_{i-1}, \alpha x_i+f, x_{i+1},\dots, x_n),$$
where~$1\leq i\leq n$, $0\neq \alpha\in k$, and~$f$ lies in the subalgebra of~$A_n$ generated by~$X\setminus\{x_i\}$,
to be an \emph{elementary automorphism}  of~$A_n$.
The subgroup~$\mathsf{TA}(A_n)$ of~$\mathsf{Aut}(A_n)$ generated by all elementary automorphisms is known as the \emph{tame automorphism group} of~$A_n$.  We refer to the
elements of~$\mathsf{TA}(A_n)$ as \emph{tame automorphisms} and those in~$\mathsf{Aut}(A_n)\setminus\mathsf{TA}(A_n)$ as \emph{wild automorphisms}. We denote by
$$\varphi=(f_1, \dots, f_n)$$
the endomorphism~$\varphi$ of~$A_n$ defined by the rule $\varphi(x_i)=f_i\in A_n$ for all~$i\leq n$.

Known results have demonstrated close connections between the automorphisms of polynomial algebras,  free associative algebras, and free Poisson algebras.
For instance, automorphisms of polynomial algebras (free associative algebras and free Poisson algebras, respectively) in two variables are tame~\cite{ass-rank,com-tame,com-tam,ass-tame,pos-tame}, where the characteristic of the underlying fields is assumed to be 0 for Poisson algebras.  Moreover,  the automorphism groups of the aforementioned two-generated free algebras are isomorphic.

On the other hand, automorphisms of a polynomial algebra and of a free  associative algebras in three variables can be wild,  and the most famous example are the Nagata automorphism \cite{Nagata, US041,US042}  and the Anick automorphism \cite{Umir07Anick}.  Since every automorphism of an $n$-generated polynomial algebra can be lifted to an automorphism of an $n$-generated free Poisson algebra, it follows that $3$-generated free Poisson algebras have wild automorphisms  as well.  However, to our knowledge, it remains an open question whether every wild automorphism of an $n$-generated free Poisson algebra induces a wild automorphism of an $n$-generated polynomial algebra under the projection sending Poisson brackets to zero. In this article we construct an Anick type wild automorphism
\begin{equation}\label{Anick-auto}
\delta: =(x_1+[x_3, x_1x_3-[x_3,x_2]], x_2+(x_1x_3-[x_3,x_2])x_3, x_3)
\end{equation}
of a 3-generated free Poisson algebra, which induces a tame automorphism of a polynomial algebra in 3 variables. Since~$\delta(x_1x_3-[x_3,x_2])=x_1x_3-[x_3,x_2]$, we deduce that~$\delta$ is an epimorphism. By a similar proof as that in \cite[p. 343, Proposition 8.1]{Cohn},  we conclude that~$\delta$ is an automorphism.

The article is organized as follows: In Section~\ref{SS-TA}, we explore the connections between the group of tame automorphisms of a free Poisson algebras and that of a polynomial algebra. In Section~\ref{SS-JM}, we define Jacobian matrices for endomorphisms of free Poisson algebras and prove the chain rule. Although the chain rule holds in general~\cite{Dren93obstruction}, for the sake of completeness and for the readers' convenience, we still include a proof. In Section~\ref{SS-wild},  we establish our main result: the automorphism~$\delta$ in~\eqref{Anick-auto} is wild and stably tame. 

Our approach is primarily inspired by~\cite{Umir07Anick}, with modifications to make the proofs simplified.

\section{Tame automorphisms of free Poisson algebras}\label{SS-TA}
Our aim in this section is to  investigate the connections between tame automorphisms of free Poisson algebras and those of polynomial algebras.  Denote by~$P_n$ the free Poisson algebra with free generating set~$X:=\{x_1,\dots, x_n\}$. Let~$C_n$ be the polynomial algebra with variables~$x_1,\dots, x_n$, and we always suppose that~$C_n$ is endowed with the trivial Poisson brackets. Then there exists a Poisson algebra homomorphism
$$\pi_n: P_n \longrightarrow C_n, \quad  x_i\mapsto x_i, \mbox{ for all } 1\leq i\leq n.$$
The kernel~$\mathsf{Ker}(\pi_n)$ of~$\pi_n$ is the ideal of~$P_n$ generated by~$\{[x_i, x_j]\mid x_i, x_j\in X, i>j\}$. Finally, for every endomorphism~$\varphi=(f_1,\dots, f_n)$ of~$P_n$, we denote by~$\overline{\varphi}=(\pi_n(f_1),\dots, \pi_n(f_n))$ the induced endomorphism of~$\varphi$ in~$C_n$. If~$\varphi$ is an automorphism, then so is~$\overline{\varphi}$.
Obviously, every elementary automorphism~$\varphi$ of the form
\begin{equation}\label{AutInKer}
\sigma(i,1,f), f\in \mathsf{Ker}(\pi_n),
\end{equation}
satisfies that~$\overline{\varphi}=\mathsf{id}$, the identity map of the algebra~$C_n$.

For~$p\neq q$, we denote by~$\sigma_{pq}$ the automorphism of~$A_n$ that interchanges~$x_p$ and~$x_q$,  while the remaining generators are fixed. Then~$\sigma_{pq}$ is tame and we have
$$\sigma_{pq}=\sigma(q,-1,x_p)\sigma(p,1,-x_q)\sigma(q,1,x_p).$$
It is straightforward to see that the following relations~\cite{Umir06Relation,Umir07Anick} between elementary automorphisms hold for~$A_n$:
\begin{equation}\label{relation1}
\sigma(i,\alpha,f)\sigma(i,\beta,g)=\sigma(i,\alpha\beta, \beta f+g);
\end{equation}
\begin{equation}\label{relation2}
\sigma(i,\alpha,f)^{-1}\sigma(j,\beta,g)\sigma(i,\alpha,f)=\sigma(j,\beta,\sigma(i,\alpha,f)^{-1}(g)),
\end{equation}
where~$i\neq j$ and~$f$ lies in the subalgebra of~$A_n$ generated by~$X\setminus\{x_i,x_j\}$;
\begin{equation}\label{relation3}
 \sigma_{pq}\sigma(i,\alpha,f) \sigma_{pq}=\sigma(j,\alpha,\sigma_{pq}(f)),
\end{equation}
where~$1\leq p\neq q\leq n$ and~$x_j=\sigma_{pq}(x_i)$.

For~$j>1$, by straightforward calculations or by~\cite{Umir07Anick}, we know that~$\sigma_{1j}$ satisfies the property
 \begin{equation}\label{sigma1j}
 \sigma_{1j}=\sigma(1,-1,0)\sigma(1,1,x_j)\sigma(j,1,-x_1)\sigma(1,1,x_j).
 \end{equation}
And thus, by~\eqref{relation3}, we obtain
\begin{equation}\label{sigma2j}
   \sigma(j,\alpha, g)=\sigma_{1j}\sigma(1,\alpha, \sigma_{1j}(g))\sigma_{1j}.
\end{equation}

Note that~\eqref{sigma1j} and~\eqref{sigma2j} are general facts involving only elementary automorphisms and do not depend on the underlying algebra.

In general, the relations \eqref{relation1}-\eqref{relation3} do not form a complete set of defining relations of~$\mathsf{TA}(A_n)$, but they do if~$A_n=C_3$ is the polynomial algebra with variables~$x_1,x_2,x_3$ over a field of characteristic 0~\cite{Umir06Relation}.

\begin{lemm}\label{kernel-aut}
Let~$\Pi$ be the group homomorphism as follows:
  $$\Pi : \mathsf{TA}(P_n)\longrightarrow \mathsf{TA}(C_n),  \ \varphi\mapsto \overline{\varphi}.$$
  Then the kernel of~$\Pi$, as a normal subgroup of $\mathsf{TA}(P_n)$, is generated by all elementary automorphisms of the form given in~\eqref{AutInKer}.
\end{lemm}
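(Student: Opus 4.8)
Write $N$ for the normal subgroup of $\mathsf{TA}(P_n)$ generated by the elementary automorphisms of the form~\eqref{AutInKer}; the goal is to show $\mathsf{Ker}(\Pi)=N$. The inclusion $N\subseteq\mathsf{Ker}(\Pi)$ is the easy half: each generator $\sigma(i,1,f)$ with $f\in\mathsf{Ker}(\pi_n)$ already satisfies $\overline{\sigma(i,1,f)}=\mathsf{id}$, hence lies in $\mathsf{Ker}(\Pi)$, and since $\mathsf{Ker}(\Pi)$ is a normal subgroup it must contain all of $N$. So the real content is the reverse inclusion $\mathsf{Ker}(\Pi)\subseteq N$.

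For that, I would take an arbitrary $\varphi\in\mathsf{Ker}(\Pi)$ and, using tameness, write $\varphi=\tau_1\cdots\tau_m$ with $\tau_k=\sigma(i_k,\alpha_k,f_k)$ elementary. Let $k[X]$ denote the commutative associative subalgebra of $P_n$ generated by $X$, so that $\pi_n$ restricts to an isomorphism $k[X]\xrightarrow{\sim}C_n$. For each $k$ I would split $f_k=\tilde f_k+h_k$, where $\tilde f_k\in k[X]$ is the bracket-free lift of $\pi_n(f_k)$ and $h_k=f_k-\tilde f_k\in\mathsf{Ker}(\pi_n)$; both summands lie in the subalgebra generated by $X\setminus\{x_{i_k}\}$, so $\sigma(i_k,1,h_k)$ is of the form~\eqref{AutInKer}. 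Relation~\eqref{relation1} then gives the factorization
$$\tau_k=\sigma(i_k,\alpha_k,\tilde f_k)\,\sigma(i_k,1,h_k)=:\tilde\tau_k\,n_k,\qquad n_k\in N .$$
Feeding these into $\varphi=\tau_1\cdots\tau_m$ and using the normality of $N$ to carry every $n_k$ to the far right past the remaining $\tilde\tau$'s, I would arrive at $\varphi=\psi\,n'$ with $\psi:=\tilde\tau_1\cdots\tilde\tau_m$ and $n'\in N$.

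It then suffices to prove $\psi=\mathsf{id}$, since that gives $\varphi=n'\in N$. This is the one point that needs care, and where I expect the main obstacle: I claim no Poisson bracket is ever created when composing the bracket-free lifts $\tilde\tau_k$ inside $P_n$. Indeed, each $\tilde\tau_k$ sends every generator into $k[X]$ and is an algebra homomorphism, so it maps $k[X]$ into itself; composing, $\psi(x_i)\in k[X]$ for all $i$. On the other hand $\psi=\varphi\,(n')^{-1}$ lies in $\mathsf{Ker}(\Pi)$, so $\pi_n(\psi(x_i))=x_i$. Because $\pi_n$ is injective on $k[X]$ and fixes each $x_i$, this forces $\psi(x_i)=x_i$, i.e. $\psi=\mathsf{id}$, completing the proof. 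The delicate step is thus confining $\psi(x_i)$ to the bracket-free subalgebra $k[X]$; once that is secured, injectivity of $\pi_n|_{k[X]}$ does the rest.
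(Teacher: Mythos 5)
Your proof is correct, and its engine is the same as the paper's: split each elementary factor as $f_k=\tilde f_k+h_k$ with $\tilde f_k$ bracket-free and $h_k\in\mathsf{Ker}(\pi_n)$, and use relation~\eqref{relation1} to factor $\sigma(i_k,\alpha_k,f_k)=\sigma(i_k,\alpha_k,\tilde f_k)\,\sigma(i_k,1,h_k)$. Where you diverge is in how the conclusion is extracted. The paper passes to the quotient $\mathsf{TA}(P_n)/N$, builds an explicit section $\Omega:\mathsf{TA}(C_n)\to\mathsf{TA}(P_n)/N$ by lifting coordinates to bracket-free polynomials, and checks $\Pi_1\Omega$ and $\Omega\Pi_1$ are identities; this yields the stronger statement $\mathsf{TA}(P_n)/N\cong\mathsf{TA}(C_n)$ as a byproduct, but buries the key verification (that composing bracket-free lifts creates no brackets, so $(\varphi\varphi')_P=\varphi_P\varphi'_P$) inside the phrase ``Clearly, $\Omega$ is a group homomorphism.'' You instead normalize a word in $\mathsf{TA}(P_n)$ directly, pushing the $N$-factors to the right by normality and then arguing that the residual product $\psi=\tilde\tau_1\cdots\tilde\tau_m$ fixes each $x_i$ because $\psi(x_i)$ stays in the bracket-free subalgebra $k[X]$ on which $\pi_n$ is injective. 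That last step is exactly the verification the paper elides, so your write-up is arguably more self-contained on the one delicate point, at the cost of not exhibiting the quotient isomorphism explicitly. Both arguments are sound; all the auxiliary claims you make (that $\tilde f_k$ and $h_k$ remain in the subalgebra generated by $X\setminus\{x_{i_k}\}$, and that each $\tilde\tau_k$ preserves $k[X]$) do hold in the free Poisson algebra.
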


\begin{proof}
   Let~$N$ be the normal subgroup of $\mathsf{TA}(P_n)$ generated by all elementary automorphisms of the form given in~\eqref{AutInKer}. Clearly we have~$N\subseteq \mathsf{Ker}(\Pi)$, so we obtain a well-defined group homomorphism
$$\Pi_1: \mathsf{TA}(P_n)/N \longrightarrow \mathsf{TA}(C_n),  \ \varphi N\mapsto \overline{\varphi}.$$
It suffices to show that~$\Pi_1$ is an isomorphism.

For every element~$f\in P_n$, we have $f=f_0+f_1$, where~$f_0$ does not involve Poisson brackets and $f_1$ lies in~$\mathsf{Ker}(\pi_n)$.
Moreover, for~$\sigma(i,\alpha, f)\in \mathsf{TA}(P_n)$, by~\eqref{relation1}, we have
$$\sigma(i,\alpha, f)=\sigma(i,\alpha, f_0)\sigma(i,1, f_1).$$
So we deduce~$\sigma(i,\alpha, f)N=\sigma(i,\alpha, f_0)N$ and
\begin{equation}\label{pisigma}
 \Pi_1(\sigma(i,\alpha, f)N)=\Pi_1(\sigma(i,\alpha, f_0)N)
=\overline{\sigma(i,\alpha,f_0)}=\sigma(i,\alpha,f_0)\in \mathsf{TA}(C_n).
\end{equation}
On the other hand,  let~$\varphi=(g_1,\dots, g_n)$ be a (tame) automorhphism of~$C_n$. Considering $g_1,\dots,g_n$ as elements in~$P_n$, we know that $\varphi_{_P}:=(g_1,\dots,g_n)\in \mathsf{Aut}(P_n)$.   Define
 a map
$$\Omega : \mathsf{TA}(C_n) \longrightarrow \mathsf{TA}(P_n)/N,\  \varphi\mapsto \varphi_{_P}N.$$
Clearly, $\Omega$ is a group homomorphism and
 we have
 \begin{equation}\label{omesigma}
   \Omega( \sigma(i,\alpha,f))=\sigma(i,\alpha,f)N.
 \end{equation}
 Since elementary automorphisms generate the tame automorphism groups, by \eqref{pisigma} and~\eqref{omesigma}, we have
$$\Omega \Pi_1=\mathsf{Id}  \mbox{ and } \Pi_1\Omega=\mathsf{Id'},$$
where $\mathsf{Id} $ is the identity map of $\mathsf{TA}(P_n)/N$ and~$\mathsf{Id'}$ is the identity map of~$\mathsf{TA}(C_n) $. The result follows immediately.
 \end{proof}

\section{Jacobian matrices and the chain rule}\label{SS-JM}
In this section, we first recall from~\cite{Umir12pe} the universal (multiplicative) enveloping algebra~$\mP^e$ of an arbitrary Poisson algebra~$\mP$ in the variety of all Poisson algebras. Then we define the Jacobian matrices for endomorphisms of $P_n$. Finally, we prove that the chain rule holds.

Let~$M_{\mP}=\{M_a\mid a\in \mP\}$ and let~$H_{\mP}=\{H_{a}\mid a\in \mP\}$ be two copies of~$\mP$ such that~$M: \mP\longrightarrow M_{\mP}$ sending~$a$ to~$M_a$ and $H: P\longrightarrow H_{\mP}$ sending~$a$ to~$H_a$  are linear isomorphisms. Then $\mP^e$ is the associative algebra generated by the spaces~$M_{\mP}$ and~$H_{\mP}$, with defining relations
\begin{equation}\label{rela-pe1}
  M_{a}M_{b}=M_{ab},
\end{equation}
\begin{equation}\label{rela-pe2}
H_{[a,b]}=H_aH_b-H_bH_a,
\end{equation}
\begin{equation}\label{rela-pe3}
H_{ab}=M_bH_a+M_aH_b,
\end{equation}
\begin{equation}\label{rela-pe4}
H_aM_b=M_{[a,b]}+M_bH_a,
\end{equation}
\begin{equation}\label{rela-pe5}
M_1=1
\end{equation}
for all~$a,b\in \mP$. It follows from~\eqref{rela-pe3} and~\eqref{rela-pe5} that~$H_1=0$.
Moreover, every (unital) $\mP$-module~$\mathcal{V}$ is a left $\mP^e$-module with the actions
$$M_a(v)=a\cdot v, \mbox{ and } H_{a}(v)=[a,v]$$ for all~$a\in \mP$ and~$v\in \mathcal{V}$.  Assume that~$\mP$ is generated by~$X$. Rewriting all the terms on the left-hand side of the above equations in~\eqref{rela-pe1}-\eqref{rela-pe4}, we observe that elements of the form~$M_aH_{x_{i_1}}...H_{x_{i_t}}$ with~$t\geq 0$ forms a linear generating set of~$\mP^e$.
Define
$$P_n^e=(P_n)^e \mbox{ and } C_n^e=(C_n)^e.$$
Then we have
\begin{lemm}\cite[Corollary 3]{Umir12pe}\label{basis}
  Every nonzero element in~$P_n^e$ can be uniquely written in the form
  $$\sum_{1\leq i\leq q}M_{a_i}w_i,$$ where each~$a_i$ lies in~$P_n$, each~$w_i$ is of the form~$H_{x_{i_1}}...H_{x_{i_t}}$, where~$t$ depends on~$i$,   and each~$w_i$ is distinct from the others.
\end{lemm}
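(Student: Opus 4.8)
The statement asserts existence and uniqueness of a normal form, and the existence part has essentially been established in the paragraph preceding the lemma: rewriting the left-hand sides of \eqref{rela-pe1}--\eqref{rela-pe4} shows that the elements $M_aH_{x_{i_1}}\cdots H_{x_{i_t}}$ span $P_n^e$, and collecting the terms with equal $H$-words yields a presentation of the required shape. Hence the whole content is \emph{uniqueness}, which is equivalent to the assertion that $P_n^e$ is a \emph{free left module} over its commutative subalgebra $M_{P_n}\cong P_n$ on the set $W$ of all words $w=H_{x_{i_1}}\cdots H_{x_{i_t}}$ (including the empty word $1$). So the plan is to produce a $P_n^e$-module on which the spanning family acts by visibly linearly independent operators.

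Recall from the introduction that $P_n\cong S(\mathcal{L})$, the polynomial algebra on a linear basis $Y$ of the free Lie algebra $\mathcal{L}$ on $X$, with Poisson bracket extending the Lie bracket of $\mathcal{L}$. For $a\in\mathcal{L}$ the map $[a,-]$ is a derivation of $(P_n,\cdot)$ by the Leibniz identity, and $a\mapsto[a,-]$ is a Lie homomorphism $\mathcal{L}\to\mathrm{Der}(P_n)$ by the Jacobi identity; this is exactly a Hopf action of $U(\mathcal{L})$ on $P_n$. I would build the smash product $D:=P_n\# U(\mathcal{L})$, whose underlying space is $P_n\otimes U(\mathcal{L})$ and whose multiplication is determined by declaring $P_n\otimes 1$ and $1\otimes U(\mathcal{L})$ to be subalgebras and by the cross-relation $(1\otimes a)(b\otimes 1)=[a,b]\otimes 1+b\otimes a$ for $a\in\mathcal{L}$, $b\in P_n$. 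Since $U(\mathcal{L})\cong k\langle x_1,\dots,x_n\rangle$ is free associative with the words in the $x_j$ as a basis, $D$ has the $k$-basis $\{m\otimes w\}$ with $m$ ranging over the monomial basis of $P_n$ and $w$ over $W$.

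Then define $\Phi\colon P_n^e\to D$ on generators by $\Phi(M_a)=a\otimes 1$ for all $a\in P_n$ and $\Phi(H_a)=1\otimes a$ for $a\in\mathcal{L}$ (this forces, via \eqref{rela-pe3}, $\Phi(H_f)=\sum_{y\in Y}(\partial f/\partial y)\otimes y$ for general $f\in P_n$, where $\partial/\partial y$ is the formal partial derivative in $k[Y]$; note the two prescriptions agree on $\mathcal{L}$ since its elements are linear in $Y$). The crux is to verify that $\Phi$ respects all five relations \eqref{rela-pe1}--\eqref{rela-pe5}: relations \eqref{rela-pe1}, \eqref{rela-pe2}, \eqref{rela-pe5} hold because $P_n$ and $U(\mathcal{L})$ embed as subalgebras of $D$, while \eqref{rela-pe4} with $a\in\mathcal{L}$ is precisely the defining cross-relation of the smash product. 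The genuinely non-formal checks are \eqref{rela-pe3} and the extension of \eqref{rela-pe4} to arbitrary $a,b\in P_n$; both reduce to the Leibniz rule for $\partial/\partial y$ together with the standard expansion $[a,b]=\sum_{y,y'\in Y}(\partial a/\partial y)(\partial b/\partial y')\,[y,y']$ of the Poisson bracket on $S(\mathcal{L})$, and this bookkeeping is where I expect the main effort to lie. Granting that $\Phi$ is a homomorphism, $D$ becomes a left $P_n^e$-module, and a direct computation gives $(M_aw)\cdot(1\otimes 1)=a\otimes w$ for every $w\in W$; as the elements $a\otimes w$ (with $a$ over a basis of $P_n$ and $w$ over $W$) are linearly independent in $D$, the spanning family $\{M_aw\}$ is linearly independent, which yields the uniqueness. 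A fully equivalent route would be to orient \eqref{rela-pe1}--\eqref{rela-pe5} into rewriting rules and invoke the Diamond Lemma, in which case the same compatibility computations reappear as the resolution of the overlap ambiguities.
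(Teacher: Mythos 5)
The paper offers no proof of this lemma at all --- it is quoted verbatim from \cite[Corollary 3]{Umir12pe} --- so there is no internal argument to compare against; what you have written is a genuine, self-contained proof attempt, and its strategy is sound. Realizing $P_n^e$ inside the smash product $D=P_n\#\,U(\mathcal{L})$ (whose underlying space $P_n\otimes U(\mathcal{L})\cong P_n\otimes k\langle X\rangle$ has the visible basis $\{m\otimes w\}$) and reading off linear independence of the spanning family $\{M_aw\}$ from $\Phi(M_aw)=a\otimes x_{i_1}\cdots x_{i_t}$ is essentially how the structure of the enveloping algebra of a free Poisson algebra is established in the literature, so the approach both works and is close in spirit to the cited source. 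One item in your accounting is off, however: relation \eqref{rela-pe2} does \emph{not} belong to the list of checks that hold ``because $P_n$ and $U(\mathcal{L})$ embed as subalgebras of $D$.'' That reasoning applies only when $a,b\in\mathcal{L}$, where $\Phi(H_a)=1\otimes a$; for general $a,b\in P_n$ one has $\Phi(H_a)=\sum_{y}(\partial a/\partial y)\otimes y$, and verifying $\Phi(H_{[a,b]})=\Phi(H_a)\Phi(H_b)-\Phi(H_b)\Phi(H_a)$ requires exactly the same expansion $[a,b]=\sum_{y,y'}(\partial a/\partial y)(\partial b/\partial y')[y,y']$ together with the Leibniz rule and the cross-relation of $D$ that you invoke for \eqref{rela-pe3} and \eqref{rela-pe4}. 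So the ``genuinely non-formal checks'' are \eqref{rela-pe2}, \eqref{rela-pe3} and \eqref{rela-pe4} for arbitrary arguments; once those computations are carried out (they do close up, as a direct calculation with formal partials on $S(\mathcal{L})$ confirms), the argument is complete.
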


Since~$C_n$ is a Poisson algebra with trivial Poisson brackets, by Equations~\eqref{rela-pe1}, \eqref{rela-pe2} and~\eqref{rela-pe4}, we deduce that~$C_n^e$ is a commutative algebra. Combining this with Lemma~\ref{basis}, we obtain  the following corollary:
\begin{coro}\label{c1e}
$C_1^e=P_1^e$ is the polynomial algebra with variables~$M_{x_1}$ and~$H_{x_1}$.
  \end{coro}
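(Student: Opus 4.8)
The plan is to prove Corollary~\ref{c1e} by directly applying Lemma~\ref{basis} to the special case $n=1$ and then checking that, in the one-generator situation, the associative algebra $P_1^e$ has no relations beyond commutativity of its two natural generators $M_{x_1}$ and $H_{x_1}$. First I would note that, since $P_1$ is generated by the single element $x_1$, Lemma~\ref{basis} tells us that every nonzero element of $P_1^e$ is uniquely a sum $\sum_i M_{a_i} w_i$ where each $w_i$ is a word $H_{x_1}^{t_i}$ (the only available letter is $H_{x_1}$) with distinct exponents $t_i$, and each $a_i\in P_1$. Because $P_1$, the free Poisson algebra on one generator, carries trivial bracket (there is nothing to bracket against itself, as $[x_1,x_1]=0$ forces all brackets to vanish), $P_1$ coincides with the polynomial algebra $C_1=k[x_1]$; hence $C_1^e=P_1^e$, as asserted.

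Next I would identify the multiplicative structure. By relation~\eqref{rela-pe1} the elements $M_a$ for $a\in P_1=k[x_1]$ form a commutative subalgebra isomorphic to $k[x_1]$ via $M_{x_1}$, so every $M_{a_i}$ is a polynomial in $M_{x_1}$. Thus the normal form of Lemma~\ref{basis} exhibits $P_1^e$ as the free left module over $k[M_{x_1}]$ with basis $\{H_{x_1}^{t}\mid t\geq 0\}$; equivalently, $P_1^e$ is spanned by the monomials $M_{x_1}^{s}H_{x_1}^{t}$ with $s,t\geq 0$, and these are linearly independent by the uniqueness clause of Lemma~\ref{basis}. It then remains only to verify that $M_{x_1}$ and $H_{x_1}$ commute, which reduces to relation~\eqref{rela-pe4}: taking $a=b=x_1$ gives $H_{x_1}M_{x_1}=M_{[x_1,x_1]}+M_{x_1}H_{x_1}=M_{x_1}H_{x_1}$, since $[x_1,x_1]=0$. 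Combined with the linear independence of the monomials $M_{x_1}^{s}H_{x_1}^{t}$, this shows $P_1^e$ is precisely the polynomial algebra $k[M_{x_1},H_{x_1}]$.

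An alternative, cleaner packaging would be to invoke the already-derived commutativity of $C_n^e$: since $C_1^e=P_1^e$ is commutative, every element is a $k$-linear combination of products of $M_{x_1}$ and $H_{x_1}$ in any order, and Lemma~\ref{basis} supplies a canonical monomial basis $\{M_{x_1}^{s}H_{x_1}^{t}\}$, immediately yielding the polynomial-algebra identification. I do not expect any serious obstacle here: the statement is essentially a one-variable specialization of Lemma~\ref{basis}, and the only point requiring care is confirming that no extra relation collapses the two generators—this is handled by the uniqueness in Lemma~\ref{basis} together with the vanishing of $[x_1,x_1]$ in \eqref{rela-pe4}. The mildly delicate step is the bookkeeping observation that $P_1$ itself is already commutative (so that $C_1^e$ and $P_1^e$ genuinely coincide rather than merely mapping onto one another), but this follows at once from antisymmetry of the bracket on a single generator.
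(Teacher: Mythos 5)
Your proof is correct and takes essentially the same route as the paper: the paper obtains the corollary by noting that trivial brackets make $C_n^e$ commutative (via \eqref{rela-pe1}, \eqref{rela-pe2} and \eqref{rela-pe4}) and then combining this with the normal form of Lemma~\ref{basis}, which is exactly your argument specialized to $n=1$. Your additional checks (that $P_1=C_1$ since the bracket vanishes on one generator, and that the monomials $M_{x_1}^sH_{x_1}^t$ are linearly independent by the uniqueness clause of Lemma~\ref{basis}) are correct details that the paper leaves implicit.
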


We further observe that homomorphisms of Poisson algebras induce homomorphisms of their corresponding universal enveloping algebras. Let~$\mP_1$ and $\mP_2$ be two Poisson algebras, and let~$\varphi$ be a Poisson algebra homomorphism from~$\mP_1$ to~$\mP_2$. Since~$(\mP_1)^e$ and~$(\mP_2)^e$ are both defined by relations of the form \eqref{rela-pe1}-\eqref{rela-pe5}, we can construct an associative algebra homomorphism
$$\varphi^e: (\mP_1)^e\longrightarrow (\mP_2)^e,\ M_a\mapsto M_{\varphi(a)},\ H_a\mapsto H_{\varphi(a)}$$
for all~$a\in \mP_1$. In particular, we obtain an algebra homorphism
$$\pi_n^e: P_n^e \longrightarrow C_n^e.$$

Now we recall that the Fox derivatives $\frac{\partial(-)}{\partial x_i}$  is a linear map from~$P_n$ to~$P_n^e$  defined by the rule:
$$\frac{\partial\xx_j}{\partial x_i} =\delta_{i,j},$$
$$\frac{\partial(ab)}{\partial x_i}=M_a\frac{\partial b}{\partial x_i}+M_b\frac{\partial a}{\partial x_i}, $$
$$\frac{\partial[a,b]}{\partial x_i}=H_a\frac{\partial b}{\partial x_i}-H_b\frac{\partial a}{\partial x_i}, $$
for all~$a,b\in P_n$, where~$\delta_{i,i}=1$ and $\delta_{i,j}=0$ if $i\neq j$. It is straightforward to show that~$\frac{\partial(-)}{\partial x_i}$ is well-defined.

Let~$\mP$ be a Poisson algebra. For all~$a_1,\dots, a_t \in \mP$, define
$$[a_1, \dots, a_t] =[\dots[[a_1,a_2], a_3], \dots, a_t],$$
where~$[a_1, \dots, a_t]$ means~$a_1$ if~$t=1$. Then we have the following formula for the Fox derivative acting on elements involving Poisson brackets.

\begin{lemm}\label{par-at}
Let~$\mP$ be a Poisson algebra. Then for all~$a_1,\dots a_t\in \mP$ with~$t\geq 2$, we have
$$\frac{\partial{[a_1, \dots, a_t]}}{\partial\xx_r}
=\sum_{1\leq j\leq t}(-H_{a_t})\dots(-H_{a_{j+1}})H_{[a_1,\dots,a_{j-1}]}\frac{\partial\aa_j}{\partial\xx_r}.$$
\end{lemm}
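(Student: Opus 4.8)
The plan is to argue by induction on $t$, exploiting the left-normed structure $[a_1,\dots,a_t]=[[a_1,\dots,a_{t-1}],a_t]$ together with the single defining rule $\frac{\partial[a,b]}{\partial x_r}=H_a\frac{\partial b}{\partial x_r}-H_b\frac{\partial a}{\partial x_r}$ for the Fox derivative on a bracket. This recursion peels off the outermost entry $a_t$, which is exactly the entry that the descending operator word $(-H_{a_t})\cdots(-H_{a_{j+1}})$ in the target formula is built to track, so the bracket rule and the claimed expression are naturally aligned.

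For the base case $t=2$ I would simply apply the defining rule to $[a_1,a_2]$, obtaining $H_{a_1}\frac{\partial a_2}{\partial x_r}-H_{a_2}\frac{\partial a_1}{\partial x_r}$; reading the claimed sum at $t=2$ under the conventions that an empty product $(-H_{a_t})\cdots(-H_{a_{j+1}})$ equals $1$ and that the inner factor $H_{[a_1,\dots,a_{j-1}]}$ is the identity of $P_n^e$ when $j=1$ (the bracket being empty), this is precisely the right-hand side.

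For the inductive step, assuming the formula for $t-1$, I would write $[a_1,\dots,a_t]=[[a_1,\dots,a_{t-1}],a_t]$ and apply the defining rule once to get
$$\frac{\partial[a_1,\dots,a_t]}{\partial x_r}=H_{[a_1,\dots,a_{t-1}]}\frac{\partial a_t}{\partial x_r}-H_{a_t}\frac{\partial[a_1,\dots,a_{t-1}]}{\partial x_r}.$$
The first summand is exactly the $j=t$ term of the target, its leading operator word being empty. Into the second summand I would substitute the induction hypothesis and distribute $-H_{a_t}$ over the sum; since $-H_{a_t}\cdot(-H_{a_{t-1}})\cdots(-H_{a_{j+1}})=(-H_{a_t})(-H_{a_{t-1}})\cdots(-H_{a_{j+1}})$, the $j$-th summand of the hypothesis ($1\le j\le t-1$) turns into exactly the $j$-th summand of the target. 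Collecting the $j=t$ term with the transformed terms $j=1,\dots,t-1$ yields the claimed identity. Linearity of $H$ and of the Fox derivative is what lets the single new factor distribute cleanly across the inductive sum.

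The only delicate point is the boundary bookkeeping: one must fix once and for all that the descending operator product is empty (hence $1$) at $j=t$ and that the inner factor $H_{[a_1,\dots,a_{j-1}]}$ degenerates to the identity at $j=1$. I would stress that this last degeneracy is a purely formal \emph{empty-bracket} convention and must be kept distinct from the genuine relation $H_1=0$ that follows from~\eqref{rela-pe3} and~\eqref{rela-pe5}; conflating the two would erroneously kill the $j=1$ contribution. Once these conventions are pinned down, the induction is routine, since the bracket rule introduces exactly one new operator $-H_{a_t}$ at the front and no reindexing beyond shifting the upper summation limit from $t-1$ to $t$ is required.
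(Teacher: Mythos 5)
Your proof is correct. It follows the same basic strategy as the paper's --- induction on $t$ driven by the defining rule $\frac{\partial[a,b]}{\partial x_r}=H_a\frac{\partial b}{\partial x_r}-H_b\frac{\partial a}{\partial x_r}$ --- but the recursion runs in the opposite direction. You peel off the outermost entry, writing $[a_1,\dots,a_t]=[[a_1,\dots,a_{t-1}],a_t]$, apply the bracket rule once, and distribute the single new operator $-H_{a_t}$ over the inductive sum; this requires no relabelling, and the $j=t$ term appears directly as $H_{[a_1,\dots,a_{t-1}]}\frac{\partial a_t}{\partial x_r}$. The paper instead contracts the innermost bracket, setting $b_1=[a_1,a_2]$ and $b_j=a_{j+1}$ for $j\geq 2$, applies the inductive hypothesis to $[b_1,\dots,b_{t-1}]$, and only at the end re-expands $\frac{\partial[a_1,a_2]}{\partial x_r}$; this costs a reindexing $j=p+1$ and a separate treatment of the $p=1$ term, which splits into the $j=1$ and $j=2$ terms of the target. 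Your version is the lighter of the two in bookkeeping. Your remark on the boundary conventions is also well taken: at $j=1$ the factor $H_{[a_1,\dots,a_{j-1}]}$ must be read as simply absent (the identity operator of the enveloping algebra), not as $H_1=0$, which would wrongly annihilate that term; the paper leaves this implicit by omitting the factor in the displayed base case, so making the convention explicit is a genuine improvement in precision.
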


\begin{proof}
We use induction on~$t$. For~$t=2$,  we have
$$\frac{\partial{[a_1, a_2]}}{\partial\xx_r}
=(-H_{a_2})\frac{\partial\aa_1}{\partial\xx_r}+H_{a_1}\frac{\partial\aa_2}{\partial\xx_r}.$$
Now we assume that~$t>2$. We define~$b_1=[a_1, a_2]$, and $b_j=a_{j+1}$ for all~$2\leq j\leq t-1$. By induction hypothesis, we have
\begin{align*}
&\frac{\partial{[a_1, \dots, a_t]}}{\partial\xx_r}&\\
=&\sum_{1\leq p\leq t-1}
(-H_{b_{t-1}})\dots(-H_{b_{p+1}})H_{[b_1,\dots,b_{p-1}]}\frac{\partial\bb_p}{\partial\xx_r}&\\
=&\sum_{2\leq p\leq t-1}(-H_{a_t})\dots(-H_{a_{p+2}})H_{[a_1,\dots,a_p]}\frac{\partial\aa_{p+1}}{\partial\xx_r}
+(-H_{a_t})\dots(-H_{a_3})\frac{\partial[a_1,a_2]}{\partial\xx_r}&\\
=&\sum_{3\leq j\leq t} (-H_{a_t})\dots(-H_{a_{j+1}})H_{[a_1,\dots,a_{j-1}]}\frac{\partial\aa_{j}}{\partial\xx_r}
 +(-H_{a_t})\dots(-H_{a_3})
((-H_{a_2})\frac{\partial\aa_1}{\partial\xx_r}+H_{a_1}\frac{\partial\aa_2}{\partial\xx_r}).&
\end{align*}
The proof is completed.
\end{proof}

Now we define the Jacobian matrices for endomorphisms of~$P_n$ in a way that differs slightly from~\cite{Umir12pe}, as it will be more convenient to deal with matrices over a commutative associative algebra rather than a noncommutative one.
For an arbitrary associative algebra~$\mA$, we denote by~$M_{n\times n}(\mA)$ the set of all $n\times n$ matrices over~$\mA$. For every endomorphism~$\varphi$ of~$P_n$, we define the Jacobian matrix~$J(\varphi)$ of~$\varphi$ to be
$$J(\varphi)=\pi_n^e((\frac{\partial\varphi(x_i)}{\partial\xx_j})_{n\times n})
=(\pi_n^e(\frac{\partial\varphi(x_i)}{\partial\xx_j}))_{n\times n}\in M_{n\times n}(C_n^e),$$
where~$\pi_n^e(\frac{\partial\varphi(x_i)}{\partial\xx_j})$ lies in the $i$-th row and~$j$-th column of~$J(\varphi)$. Since~$C_n^e$ is commutative, the determinant~$|J(\varphi)|$ of~$J(\varphi)$ makes sense.

Recall that every endomorphism~$\varphi$ of~$P_n$ induces an endomorphism~$\overline{\varphi}$ of~$C_n$ satisfying
$$\overline{\varphi}\pi_n=\pi_n\varphi,$$
namely, the diagram
$$
\xymatrix{
P_n \ar[d]_{\pi_n} \ar[r]^{\varphi}
                &  P_n \ar[d]^{\pi_n}  \\
C_n  \ar[r]_{\overline{\varphi}}
                & C_n             }
$$
  is commutative.
In particular, it follows immediately that
\begin{equation}\label{indu-e}
  \overline{\varphi}^e\pi_n^e=(\overline{\varphi}\pi_n)^e=(\pi_n\varphi)^e=\pi_n^e\varphi^e.
\end{equation}
We are now ready to study the form of the chain rule under the aforementioned definition for Jacobian matrices, the proof of which is essentially the same as that in~\cite{Dren93obstruction}.

\begin{lemm}(chain rule)\label{chain-rule}
Let~$\varphi$ and~$\psi$ be two endomorphisms of~$P_n$. Then we have
$$J(\varphi\psi)=\overline{\varphi}^e(J(\psi))J(\varphi).$$
In particular, if~$\psi$ is an automorphism, then~$J(\psi)$ is invertible and~$J(\psi)^{-1}=\overline{\psi}^e(J(\psi^{-1}))$.
 \end{lemm}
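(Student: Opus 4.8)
The plan is to reduce the matrix identity to a scalar chain rule for the Fox derivatives inside $P_n^e$, and only at the very end to push everything down to $C_n^e$ through $\pi_n^e$. The key auxiliary statement I would establish first is that for every endomorphism $\varphi$ of $P_n$ and every $w\in P_n$,
\[
\frac{\partial \varphi(w)}{\partial x_j}=\sum_{k=1}^{n}\varphi^e\!\left(\frac{\partial w}{\partial x_k}\right)\frac{\partial \varphi(x_k)}{\partial x_j},\qquad (\star)
\]
where $\varphi^e\colon P_n^e\to P_n^e$ is the induced homomorphism of enveloping algebras. Since both sides of $(\star)$ are linear in $w$ and the Fox derivatives are linear, it suffices to verify it on elements built from the generators by multiplication and bracketing, which I would do by structural induction on the number of operations used to form $w$.

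The base case $w=x_k$ is immediate, since $\partial x_k/\partial x_l=\delta_{k,l}$ forces the right-hand side to collapse to $\varphi^e(1)\,\partial\varphi(x_k)/\partial x_j$. For the inductive step I would treat the two operations separately. If $w=ab$, then the product rule applied to $\varphi(w)=\varphi(a)\varphi(b)$ gives $M_{\varphi(a)}\,\partial\varphi(b)/\partial x_j+M_{\varphi(b)}\,\partial\varphi(a)/\partial x_j$; substituting the inductive hypothesis for $a$ and $b$ and using that $\varphi^e$ is an algebra homomorphism with $\varphi^e(M_a)=M_{\varphi(a)}$, the leading operator factors $M_{\varphi(a)},M_{\varphi(b)}$ get absorbed into $\varphi^e\big(\partial(ab)/\partial x_k\big)$, reproducing the right-hand side of $(\star)$. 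The bracket case $w=[a,b]$ is identical with $H$ in place of $M$ and $\varphi^e(H_a)=H_{\varphi(a)}$. The one subtlety is that $P_n^e$ is noncommutative, so the operator factors must be kept strictly on the left throughout; this is precisely what the homomorphism property of $\varphi^e$ guarantees, and it is the main point to get right.

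With $(\star)$ in hand, I would specialize to $w=\psi(x_i)$, so that $\varphi(w)=(\varphi\psi)(x_i)$, and apply $\pi_n^e$ to both sides. Using that $\pi_n^e$ is an algebra homomorphism together with the commutation relation $\pi_n^e\varphi^e=\overline{\varphi}^e\pi_n^e$ from \eqref{indu-e}, the $(i,j)$ entry of $J(\varphi\psi)$ becomes
\[
\pi_n^e\!\left(\frac{\partial (\varphi\psi)(x_i)}{\partial x_j}\right)=\sum_{k=1}^{n}\overline{\varphi}^e\!\left(\pi_n^e\!\left(\frac{\partial \psi(x_i)}{\partial x_k}\right)\right)\pi_n^e\!\left(\frac{\partial \varphi(x_k)}{\partial x_j}\right)=\sum_{k=1}^{n}\overline{\varphi}^e\big(J(\psi)_{ik}\big)\,J(\varphi)_{kj},
\]
which is exactly the $(i,j)$ entry of the matrix product $\overline{\varphi}^e(J(\psi))\,J(\varphi)$ (the map $\overline{\varphi}^e$ being applied entrywise). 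This proves the chain rule.

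For the final assertion, I would first observe that $J(\mathsf{id})=I_n$, since $\pi_n^e(\partial x_i/\partial x_j)=\delta_{ij}$. Applying the chain rule to $\varphi\psi=\mathsf{id}$ with $\varphi=\psi$ and $\psi$ replaced by $\psi^{-1}$ (so that $\psi\psi^{-1}=\mathsf{id}$) yields $I_n=\overline{\psi}^e(J(\psi^{-1}))\,J(\psi)$, exhibiting $\overline{\psi}^e(J(\psi^{-1}))$ as a left inverse of $J(\psi)$. Because $C_n^e$ is commutative, taking determinants gives that $|J(\psi)|$ is a unit, hence $J(\psi)$ is invertible and its (two-sided) inverse must coincide with the left inverse, i.e. $J(\psi)^{-1}=\overline{\psi}^e(J(\psi^{-1}))$. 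The principal obstacle throughout is the noncommutativity of $P_n^e$ in the inductive proof of $(\star)$; once that bookkeeping is handled, the descent to $C_n^e$ and the invertibility argument are routine.
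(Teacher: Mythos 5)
Your proof is correct, and for the main identity it follows the same route as the paper: the paper's proof opens by asserting exactly your scalar identity $(\star)$ (composed with $\pi_n^e$) as its first displayed equation, citing Bryant--Drensky rather than carrying out the structural induction that you supply; your induction on the formation of $w$, with the operator factors $M_{\varphi(a)}$, $H_{\varphi(a)}$ absorbed on the left via the homomorphism property of $\varphi^e$, is the right way to fill that in. The only genuine divergence is in the ``in particular'' clause. The paper applies the chain rule to both compositions $\psi^{-1}\psi$ and $\psi\psi^{-1}$, obtaining $E_n=\overline{\psi^{-1}}^e(J(\psi))J(\psi^{-1})$ and $E_n=\overline{\psi}^e(J(\psi^{-1}))J(\psi)$, and then applies $\overline{\psi}^e$ to the first identity and uses $\overline{\psi}^e\,\overline{\psi^{-1}}^e=\mathsf{id}$ to produce the right-inverse relation $E_n=J(\psi)\overline{\psi}^e(J(\psi^{-1}))$ directly. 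You instead use only one composition and upgrade the resulting left inverse to a two-sided inverse by taking determinants over the commutative ring $C_n^e$; this is valid (a one-sided inverse over a commutative ring forces the determinant to be a unit, hence invertibility via the adjugate), and arguably more economical, though the paper's version has the small advantage of never invoking determinants or the adjugate and of working verbatim even if one has not yet observed that $C_n^e$ is commutative.
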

\begin{proof}
Assume that~$\psi(x_i)=f=f(x_1,\dots, x_n)$. Then we have~$\varphi\psi(x_i)=f(\varphi(x_1),\dots, \varphi(x_n))$. It follows that
\begin{align*}
  \pi_n^e(\frac{\partial\varphi\psi(x_i)}{\partial\xx_j})
&=\sum_{1\leq t\leq n}\pi_n^e\varphi^e(\frac{\partial\ff}{\partial\xx_t})\pi_n^e(\frac{\partial{\varphi(x_t)}}{\partial\xx_j}) &\\
&=\sum_{1\leq t\leq n}  \overline{\varphi}^e\pi_n^e(\frac{\partial\ff}{\partial\xx_t})\pi_n^e(\frac{\partial{\varphi(x_t)}}{\partial\xx_j}) &\\
&= \overline{\varphi}^e(\pi_n^e(\frac{\partial\psi(x_i)}{\partial\xx_1}),\dots, \pi_n^e(\frac{\partial\psi(x_i)}{\partial\xx_n}))(\pi_n^e(\frac{\partial{\varphi(x_1)}}{\partial\xx_j}),\dots, \pi_n^e(\frac{\partial{\varphi(x_n)}}{\partial\xx_j}))^T,&
\end{align*}
where~$(-)^T$ means the transposed matrix of the underlying matrix.
It follows that~$J(\varphi\psi)=\overline{\varphi}^e(J(\psi))J(\varphi)$.
Consequently, we obtain
$$E_{n}=J(\psi^{-1}\psi)=\overline{\psi^{-1}}^e(J(\psi))J(\psi^{-1})$$
and
$$E_{n}=J(\psi\psi^{-1})=\overline{\psi}^e(J(\psi^{-1}))J(\psi),$$
where~$E_n$ is the identity matrix.
So we have
$$E_{n}=\overline{\psi}^e(E_n)
=\overline{\psi}^e\overline{\psi^{-1}}^e(J(\psi))\overline{\psi}^e(J(\psi^{-1}))
=J(\psi)\overline{\psi}^e(J(\psi^{-1})).$$
The result follows immediately.
\end{proof}

Now we can deduce more properties of Jacobian matrices by applying the chain rule.
\begin{lemm}\label{app-chain-rule}
Let~$\varphi$ be an endomorphism of~$P_n$ such that~$\overline{\varphi}=\mathsf{id}$. Then for every automorphism~$\psi\in \mathsf{Aut}(P_n)$, we have
$$J(\psi\varphi\psi^{-1})=J(\psi)^{-1}\overline{\psi}^e(J(\varphi))J(\psi)$$
and
$$|J(\psi\varphi\psi^{-1})|= \overline{\psi}^e(|J(\varphi)|). $$
\end{lemm}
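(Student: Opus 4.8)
The plan is to obtain both identities as direct consequences of the chain rule (Lemma~\ref{chain-rule}), by reading the conjugate $\psi\varphi\psi^{-1}$ as a two-fold composition and peeling off one factor at a time. First I would set $\theta=\varphi\psi^{-1}$ and apply the chain rule to $\psi\varphi\psi^{-1}=\psi\theta$, which gives $J(\psi\theta)=\overline{\psi}^e(J(\theta))J(\psi)$. Next I would expand $J(\theta)=J(\varphi\psi^{-1})$ by the chain rule again, obtaining $J(\varphi\psi^{-1})=\overline{\varphi}^e(J(\psi^{-1}))J(\varphi)$.

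The crucial simplification comes from the hypothesis $\overline{\varphi}=\mathsf{id}$: since $\overline{\varphi}$ is the identity of $C_n$, the induced algebra homomorphism $\overline{\varphi}^e$ is the identity of $C_n^e$, so $J(\varphi\psi^{-1})=J(\psi^{-1})J(\varphi)$. Substituting this back and using that $\overline{\psi}^e$ is an algebra homomorphism (hence distributes over the matrix product $J(\psi^{-1})J(\varphi)$), I get
$$J(\psi\varphi\psi^{-1})=\overline{\psi}^e(J(\psi^{-1}))\,\overline{\psi}^e(J(\varphi))\,J(\psi).$$
Finally I would invoke the second part of Lemma~\ref{chain-rule}, namely $\overline{\psi}^e(J(\psi^{-1}))=J(\psi)^{-1}$ (valid because $\psi$ is an automorphism, so $J(\psi)$ is invertible), which yields exactly the asserted first formula $J(\psi\varphi\psi^{-1})=J(\psi)^{-1}\overline{\psi}^e(J(\varphi))J(\psi)$.

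For the determinant identity I would pass to determinants in the formula just proved. Because $C_n^e$ is commutative, the determinant is multiplicative on $M_{n\times n}(C_n^e)$, so $|J(\psi\varphi\psi^{-1})|=|J(\psi)^{-1}|\,|\overline{\psi}^e(J(\varphi))|\,|J(\psi)|$. Since $\overline{\psi}^e$ is an algebra homomorphism and the determinant is a polynomial in the matrix entries, it commutes with $\overline{\psi}^e$, giving $|\overline{\psi}^e(J(\varphi))|=\overline{\psi}^e(|J(\varphi)|)$. The invertible scalars $|J(\psi)|$ and $|J(\psi)|^{-1}=|J(\psi)^{-1}|$ then cancel by commutativity, leaving $|J(\psi\varphi\psi^{-1})|=\overline{\psi}^e(|J(\varphi)|)$.

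I expect no serious obstacle here; the argument is essentially bookkeeping on top of the chain rule. The two points that need care are (i) justifying that $\overline{\varphi}=\mathsf{id}$ upgrades to $\overline{\varphi}^e=\mathsf{id}$ on $C_n^e$, which follows from functoriality of $(-)^e$ together with the presentation of $C_n^e$ on the generators $M_{x_i},H_{x_i}$, and (ii) keeping the order of the matrix factors straight throughout, together with the observation that although $M_{n\times n}(C_n^e)$ is noncommutative, all cancellations in the determinant step take place among the central scalars $|J(\psi)|^{\pm1}$, so commutativity of $C_n^e$ suffices.
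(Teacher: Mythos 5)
Your proof is correct and follows essentially the same route as the paper: two applications of the chain rule to $\psi\cdot(\varphi\psi^{-1})$ and to $\varphi\psi^{-1}$, the observation that $\overline{\varphi}=\mathsf{id}$ forces $\overline{\varphi}^e=\mathsf{id}$, and the identity $\overline{\psi}^e(J(\psi^{-1}))=J(\psi)^{-1}$ from Lemma~\ref{chain-rule}, with the determinant statement then following from multiplicativity over the commutative ring $C_n^e$ and the fact that $\overline{\psi}^e$ commutes with taking determinants. Your write-up merely spells out the determinant step that the paper dismisses with ``follows immediately.''
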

\begin{proof}
  By the chain rule, we have
  $$J(\psi\varphi\psi^{-1})=\overline{\psi}^e(J(\varphi\psi^{-1}))J(\psi)
  =\overline{\psi}^e(\overline{\varphi}^e(J(\psi^{-1}))J(\varphi))J(\psi)
  =J(\psi)^{-1}\overline{\psi}^e(J(\varphi))J(\psi).$$
  The other assertion follows immediately since~$\overline{\psi}^e$ is an algebra homomorphism.
\end{proof}

Let~$P\{x_3\}$ be the free Poisson algebra generated by~$x_3$. Then~$P\{x_3\}$ is the polynomial algebra endowed with trivial Poisson brackets. In order to prove that the Anick type automorphism~$\delta$ in \eqref{Anick-auto} is wild, we shall need the following  Poisson algebra homomorphism:
$$\eta: C_3\longrightarrow P\{x_3\}, x_1\mapsto 0, \ x_2\mapsto 0, \ x_3\mapsto x_3.$$
Clearly, $\eta$ is well-defined. So for every endomorphism~$\varphi$ of~$P_3$, we have
$$\eta^e(J(\varphi))=(\eta^e\pi_3^e(\frac{\partial\varphi(x_i)}{\partial\xx_j}))_{n\times n}.$$

We shall need the following observation about Fox derivatives acting on elements of~$\mathsf{Ker}(\pi_3)$ with respect to~$x_3$.
\begin{lemm} \label{par-ker}
  For every~$f\in \mathsf{Ker}(\pi_3)$, we have~$\eta^e\pi_3^e(\frac{\partial{f}}{\partial\xx_3})=0$.
\end{lemm}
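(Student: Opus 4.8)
The plan is to reduce the whole statement to a single Poisson algebra homomorphism together with a twisted Leibniz rule for the Fox derivative. Write $\theta := \eta\pi_3 : P_3 \to P\{x_3\}$; this is the Poisson homomorphism determined by $\theta(x_1)=\theta(x_2)=0$ and $\theta(x_3)=x_3$. By the functoriality of the construction $(-)^e$ (the same fact already used in \eqref{indu-e}), we have $\eta^e\pi_3^e = (\eta\pi_3)^e = \theta^e$, and this homomorphism satisfies $\theta^e(M_a)=M_{\theta(a)}$ and $\theta^e(H_a)=H_{\theta(a)}$ for all $a\in P_3$. Thus the lemma is exactly the assertion that $\theta^e(\tfrac{\partial f}{\partial x_3})=0$ for every $f\in\mathsf{Ker}(\pi_3)$.

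The key intermediate claim is the chain-rule identity
\[
\theta^e\left(\frac{\partial f}{\partial x_3}\right) = \frac{\partial \theta(f)}{\partial x_3} \qquad \text{for all } f\in P_3,
\]
where the Fox derivative on the right is taken in the one-generated algebra $P\{x_3\}$. I would prove this by the standard subalgebra argument: let $S$ be the set of $f\in P_3$ for which the identity holds. Since both $\tfrac{\partial}{\partial x_3}$ and $\theta^e$ are linear, $S$ is a subspace; using the product rule $\tfrac{\partial(ab)}{\partial x_3}=M_a\tfrac{\partial b}{\partial x_3}+M_b\tfrac{\partial a}{\partial x_3}$ together with $\theta^e(M_a)=M_{\theta(a)}$ (and the analogous bracket rule with $H$), one checks that $S$ is closed under both the associative product and the Poisson bracket. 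Finally, $S$ contains the generators: here the crucial computation is $\tfrac{\partial \theta(x_t)}{\partial x_3}=\delta_{3,t}$, which holds precisely because $\theta$ kills $x_1,x_2$ and fixes $x_3$, and this matches $\theta^e(\tfrac{\partial x_t}{\partial x_3})=\theta^e(\delta_{3,t})=\delta_{3,t}$. Hence $S$ is a Poisson subalgebra containing $x_1,x_2,x_3$, so $S=P_3$.

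With the identity in hand the lemma is immediate: for $f\in\mathsf{Ker}(\pi_3)$ we have $\theta(f)=\eta(\pi_3(f))=\eta(0)=0$, so $\tfrac{\partial \theta(f)}{\partial x_3}=0$ and therefore $\eta^e\pi_3^e(\tfrac{\partial f}{\partial x_3})=\theta^e(\tfrac{\partial f}{\partial x_3})=0$.

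I expect the only genuine work to be the inductive verification that $S$ is closed under products and brackets; this is routine but slightly fiddly, and it is entirely parallel to the chain rule of Lemma~\ref{chain-rule} (indeed one may view it as that chain rule extended from endomorphisms of $P_n$ to the homomorphism $\theta$ into a different algebra). The one conceptual point, and the only place the specific form of $\eta$ enters, is the base case $\tfrac{\partial \theta(x_t)}{\partial x_3}=\delta_{3,t}$.
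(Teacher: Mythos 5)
Your proof is correct, but it is organized differently from the paper's. The paper works with an explicit spanning set of $\mathsf{Ker}(\pi_3)$, namely the elements $[x_{i_1},\dots,x_{i_t}]g$ with $t\geq 2$ and $x_{i_2}<x_{i_1}$, and applies the closed formula of Lemma~\ref{par-at} for the Fox derivative of a left-normed bracket: every summand there is killed either by a factor $H_{\eta\pi_3(\cdot)}=H_0$ (the inner argument involves a bracket, or equals $x_{i_2}\in\{x_1,x_2\}$) or by the factor $\frac{\partial x_{i_2}}{\partial x_3}=0$; the product rule then handles the factor $g$. You instead prove the naturality identity $\theta^e\bigl(\frac{\partial f}{\partial x_3}\bigr)=\frac{\partial\,\theta(f)}{\partial x_3}$ for $\theta=\eta\pi_3$ by induction over the Poisson algebra structure, and conclude from $\theta(\mathsf{Ker}(\pi_3))=0$. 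Your route avoids both Lemma~\ref{par-at} and the choice of a spanning set of the kernel, and it isolates the one place where the specific form of $\eta$ matters (the base case $\frac{\partial\,\theta(x_t)}{\partial x_3}=\delta_{3,t}$, which holds because $\theta$ kills $x_1,x_2$ and fixes $x_3$); the price is your own structural induction for closure of $S$ under the product and the bracket, which is comparable in length to the paper's induction in Lemma~\ref{par-at} but is a statement about all of $P_3$ rather than about left-normed brackets only. One small point worth making explicit if you write this up: the Fox derivative $\frac{\partial}{\partial x_3}$ on the one-generated free Poisson algebra $P\{x_3\}$ is well defined (it lands in $P\{x_3\}^e=C_1^e$, the polynomial algebra of Corollary~\ref{c1e}), which is needed for the right-hand side of your identity to make sense; this is routine and parallels the paper's remark that $\frac{\partial(-)}{\partial x_i}$ is well defined on $P_n$.
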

\begin{proof}
For all~$x_{i_1},\dots, x_{i_t}\in X$ such that~$x_{i_2}<x_{i_1}\leq 3$, we have~$\eta\pi_3(x_{i_2})=0=\frac{\partial{x_{i_2}}}{\partial\xx_3}$.  And thus by Lemma~\ref{par-at}, we deduce
that
$$\eta^{e}\pi_3^e(\frac{\partial{[x_{i_1},\dots, x_{i_t}]}}{\partial\xx_3})=
\sum_{1\leq j\leq t}(-H_{\eta\pi_3({x_{i_t}})})
\dots(-H_{\eta\pi_3({x_{i_{j+1}}})})H_{\eta\pi_3([x_{i_1},\dots,x_{i_{j-1}}])}
\eta^e\pi_3^e(\frac{\partial\xx_{i_j}}{\partial\xx_3})=0.$$
Therefore, for every element of the form~$[x_{i_1},\dots, x_{i_t}]g\in \mathsf{Ker}(\pi_3)$ ($t\geq 2$), we have
 $$\eta^e\pi_3^e(\frac{\partial([x_{i_1},\dots, x_{i_t}]g)}{\partial\xx_3})
 =M_{\eta\pi_3(g)}\eta^e\pi_3^e(\frac{\partial[x_{i_1},\dots, x_{i_t}]}{\partial\xx_3})
 +M_{\eta\pi_3([x_{i_1},\dots, x_{i_t}])}\frac{\partial\gg}{\partial\xx_3}=0.$$
 Since elements of the form~$[x_{i_1},\dots, x_{i_t}]g$ ($t\geq 2$) linearly generate~$\mathsf{Ker}(\pi_3)$, the results follows.
 \end{proof}

 We conclude this section with a property of~$\eta^e(J(\varphi))$ for endomorphism~$\varphi$ of~$P_3$  such that~$\overline{\varphi}=\mathsf{id}$, the identity map of~$C_3$.

\begin{lemm}\label{etajvar-rough}
For all~$\varphi, \psi\in \mathsf{Aut}(P_3)$ satisfying~$\overline{\varphi}=\mathsf{id}$, we have
$$\eta^e(J(\psi\varphi\psi^{-1}))=
\begin{pmatrix}
  a_{11} & a_{12} &0\\
  a_{21}& a_{22} & 0\\
  a_{31} &a_{32} &1 \\
 \end{pmatrix}$$
 for some elements~$a_{p,q}$ $(1\leq p\leq 3, 1\leq q\leq 2)$ in~$P\{x_3\}^e$ such that
 $$ a_{11}a_{22}-a_{12}a_{21}=\eta^e\overline{\psi}^e(|J(\varphi)|).$$
Consequently, if~$\varphi$ is an elementary automorphism of the form~$\sigma(i,1,f)$ with~$f\in \mathsf{Ker}(\pi_3)$, then we have~$a_{11}a_{22}-a_{12}a_{21}=1$.
\end{lemm}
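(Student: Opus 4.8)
The plan is to reduce the statement to the structure of the Jacobian matrix of $\theta:=\psi\varphi\psi^{-1}$, invoking two facts already available: the vanishing of the Fox derivative along $x_3$ on $\mathsf{Ker}(\pi_3)$ (Lemma~\ref{par-ker}) and the conjugation formula for determinants (Lemma~\ref{app-chain-rule}). First I would note that $\overline{(-)}$ respects composition, by the commutative diagram $\overline{\varphi}\pi_n=\pi_n\varphi$ together with surjectivity of $\pi_n$; hence $\overline{\theta}=\overline{\psi}\,\overline{\varphi}\,\overline{\psi}^{-1}=\mathsf{id}$ since $\overline{\varphi}=\mathsf{id}$. Consequently $\theta(x_i)=x_i+h_i$ with each $h_i\in\mathsf{Ker}(\pi_3)$, so the $(i,j)$-entry of $\eta^e(J(\theta))$ is $\delta_{i,j}+\eta^e\pi_3^e(\partial h_i/\partial x_j)$. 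Focusing on the third column ($j=3$), Lemma~\ref{par-ker} gives $\eta^e\pi_3^e(\partial h_i/\partial x_3)=0$, so the $(i,3)$-entry reduces to $\delta_{i,3}$. This produces the asserted third column $(0,0,1)^T$ and lets me label the remaining six entries $a_{p,q}$.

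For the determinant identity I would expand $|\eta^e(J(\theta))|$ along its third column, obtaining $a_{11}a_{22}-a_{12}a_{21}$. Since $\eta^e$ is a homomorphism between commutative algebras (both $C_3^e$ and $P\{x_3\}^e$ are commutative, the latter by Corollary~\ref{c1e}), the determinant commutes with $\eta^e$, so this equals $\eta^e(|J(\theta)|)$. Lemma~\ref{app-chain-rule} gives $|J(\theta)|=\overline{\psi}^e(|J(\varphi)|)$, and combining the two yields the main assertion $a_{11}a_{22}-a_{12}a_{21}=\eta^e\overline{\psi}^e(|J(\varphi)|)$.

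For the consequence I would compute $|J(\sigma(i,1,f))|$ directly. As $\sigma(i,1,f)$ fixes $x_j$ for every $j\neq i$, all rows of its Jacobian other than the $i$-th are standard basis vectors; and since $f$ lies in the subalgebra generated by $X\setminus\{x_i\}$, one has $\partial f/\partial x_i=0$, so the $(i,i)$-entry equals $1$. Iterated cofactor expansion along the rows $j\neq i$ then gives $|J(\sigma(i,1,f))|=1$, whence $\eta^e\overline{\psi}^e(1)=1$ by unitality of both homomorphisms. I do not expect a serious obstacle: once the right lemmas are cited the argument is essentially bookkeeping of matrix entries. The two points needing genuine care are the compatibility of the determinant with $\eta^e$, which rests on commutativity of the enveloping algebras, and the elementary structural observation that $|J(\sigma(i,1,f))|=1$.
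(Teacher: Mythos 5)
Your proposal is correct and follows essentially the same route as the paper: the third column comes from Lemma~\ref{par-ker} applied to $\theta(x_i)=x_i+h_i$ with $h_i\in\mathsf{Ker}(\pi_3)$, the determinant identity comes from commuting $\eta^e$ past the determinant and invoking Lemma~\ref{app-chain-rule}, and the consequence comes from the explicit form of $J(\sigma(i,1,f))$ having ones on the diagonal and standard basis rows off the $i$-th row. No gaps.
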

\begin{proof}
 Since~$\overline{\psi\varphi\psi^{-1}}=\overline{\varphi}=\mathsf{id}$, we may assume~$\psi\varphi\psi^{-1}=(x_1+f_1,x_2+f_2,x_3+f_3)$  and~$f_1,f_2,f_3\in \mathsf{Ker}(\pi_3)$.  By Lemma~\ref{par-ker}, we obtain the desired third column of~$\eta^e(J(\psi\varphi\psi^{-1}))$.
Then, by Lemma~\ref{app-chain-rule}, we have
$$a_{11}a_{22}-a_{12}a_{21}=|\eta^e(J(\psi\varphi\psi^{-1}))|=\eta^e(|(J(\psi\varphi\psi^{-1}))|)
=\eta^e \overline{\psi}^e(|J(\varphi)|).$$
For the consequence, it suffices to show~$|J(\varphi)|=1$ since~$\eta^e \overline{\psi}^e(1)=1$.
Suppose that~$J(\varphi)=(c_{pq})_{n\times n}$. Since~$f$ does not involve~$x_i$, by direct calculation, we have
\begin{equation}\label{matrix-varphi}
c_{pq}=
\begin{cases}
\delta_{p,q}, &   \mbox{if }  p\neq i , \\
1,                   &    \mbox{if }  p=q=i,  \\
\pi_3^e(\frac{\partial\ff}{\partial\xx_q}), &   \mbox{if } p=i, q\neq i.
\end{cases}
\end{equation}
The result follows immediately.
 \end{proof}

\section{The constructed Anick type automorphism is wild}\label{SS-wild}
Our aim in this section is to establish our main result: the Anick type automorphism~$\delta$ in~\eqref{Anick-auto} is wild. For an arbitrary associative algebra~$\mA$, we call matrices in~$M_{2\times 2}(\mA)$ of the form
$$E_{12}(a):=
\begin{pmatrix}
  1 & a \\
  0 & 1
\end{pmatrix}
$$
or
$$E_{21}(a):=
\begin{pmatrix}
  1 & 0 \\
  a & 1
\end{pmatrix}
$$
\emph{elementary $2\times 2$ matrices} or \emph{elementary  matrices}, where~$a\in \mathcal{A}$. We denote by $E_2(\mA)$ the subgroup of the general linear group~$GL_2(\mA)$ generated by all elementary $2\times 2$ matrices.
The following important and useful result is proved in~\cite{cohn-matrix}, see also~\cite{Umir07Anick}.

\begin{lemm}\cite{cohn-matrix}\label{matrix-not-ele}
Let~$k[x,y]$ be the polynomial algebra with variables~$x$ and~$y$. Then the matrix
$$\begin{pmatrix}
  1+xy & y^2\\
  -x^2 &  1-xy
\end{pmatrix} $$
does not lie in~$E_2(k[x,y])$.
Consequently, the transpose of the above matrix,
 $$\begin{pmatrix}
  1+xy & -x^2\\
 y^2  &  1-xy
\end{pmatrix}
 $$
also does not belong to~$E_2(k[x,y])$.
\end{lemm}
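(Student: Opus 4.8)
The plan is to prove the statement directly: assuming $A := \begin{pmatrix} 1+xy & y^2 \\ -x^2 & 1-xy\end{pmatrix}$ lies in $E_2(k[x,y])$, I would derive a contradiction by a degree/leading-form analysis. Write $R=k[x,y]$, equip $R$ with the total-degree function $\deg$, and for $0\neq f\in R$ let $\hat f$ denote its top-degree homogeneous component. Observe first that $\det A=(1+xy)(1-xy)+x^2y^2=1$, so $A\in SL_2(R)$, and that both columns of $A$ have total degree exactly $2$, with leading matrix $\hat A=\begin{pmatrix} xy & y^2 \\ -x^2 & -xy\end{pmatrix}=\begin{pmatrix} y \\ -x\end{pmatrix}\!\begin{pmatrix} x & y\end{pmatrix}$, a rank-one homogeneous matrix whose column space is spanned by the \emph{non-constant} vector $(y,-x)^T$. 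The entire argument is organized around showing that no element of $E_2(R)$ can have this feature.

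The engine is that right multiplication by $E_{12}(c)$ (resp.\ $E_{21}(c)$) performs the column operation ``add $c$ times column $1$ to column $2$'' (resp.\ column $2$ to column $1$). Using $E_{12}(a)E_{12}(b)=E_{12}(a+b)$ and deleting factors $E_{\ast}(0)$, any element of $E_2(R)$ can be put into a reduced, strictly alternating product $E_{12}(c_1)E_{21}(c_2)E_{12}(c_3)\cdots$ (or one starting with $E_{21}$). I would first treat the clean case where every $c_i$ is \emph{non-constant}, $\deg c_i\geq 1$. Here an induction on the length $n$ shows that the two columns have degrees $\sum_{i=1}^{n}\deg c_i$ and $\sum_{i=1}^{n-1}\deg c_i$: at each step one adds a non-constant multiple of the current higher-degree column to the lower-degree one, and since $R$ is a graded domain the leading forms multiply without cancellation, so the degree strictly increases by $\deg c_i$ and the roles of the two columns alternate exactly as the factors do. In particular the two column degrees differ by $\deg c_n\geq 1$ and are never equal, whereas $A$ has both column degrees equal to $2$; thus $A$ is not a reduced alternating product with non-constant entries.

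The main obstacle is that this clean dichotomy is destroyed by \emph{constant} and unit factors, which genuinely occur — for instance a trailing factor $E_{12}(c)$ with $c\in k$ can equalize the two column degrees, and a leading factor $E_{21}(c)$ with $c\in k$ can turn degenerate leading forms into non-degenerate ones. To handle these I would refine the induction to track not merely the column degrees but the whole \emph{leading matrix} $\hat M$ (the homogeneous top-degree part of $M$ when its two columns share the same degree). The key claim, which I would establish by checking that it is preserved under left and right multiplication by each type of elementary matrix (constant or not), is: for every $M\in E_2(R)$ whose two columns have the same positive degree, the leading matrix $\hat M$ has rank at most one and its column space is spanned by a \emph{constant} vector $w\in k^2$. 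Indeed, in the reduced-alternating core the leading forms are supported on a single coordinate (the lagging row never attains the column degree), multiplication by a constant invertible matrix sends this constant direction to another constant direction, and the non-constant column operations merely rescale within the fixed direction. This is exactly the content of Cohn's analysis~\cite{cohn-matrix}, to which I would appeal for the full case bookkeeping.

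With the key claim in hand the conclusion is immediate: $A$ has equal positive column degrees, yet $\hat A=\begin{pmatrix} y \\ -x\end{pmatrix}\!\begin{pmatrix} x & y\end{pmatrix}$ has column space spanned by the non-constant vector $(y,-x)^T$, contradicting the claim; hence $A\notin E_2(R)$. For the final assertion, $E_2(R)$ is closed under transpose, since $E_{12}(a)^T=E_{21}(a)$, $E_{21}(a)^T=E_{12}(a)$, and transposition reverses products; therefore $A\in E_2(R)\iff A^T\in E_2(R)$, and the statement just proved gives $A^T\notin E_2(R)$. Alternatively one applies the key claim directly to $A^T$, whose leading matrix $\begin{pmatrix} x \\ y\end{pmatrix}\!\begin{pmatrix} y & -x\end{pmatrix}$ again has a non-constant column space.
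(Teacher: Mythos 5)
The paper gives no argument for this lemma at all---it is imported verbatim from Cohn's paper with a bare citation---so there is no in-paper proof to compare against; your sketch should therefore be judged as a reconstruction of Cohn's argument. As far as it goes, it is the right reconstruction: the computation $\det A=1$, the factorization of the leading matrix as $\left(\begin{smallmatrix} y\\ -x\end{smallmatrix}\right)\left(\begin{smallmatrix} x & y\end{smallmatrix}\right)$, the reduction of any element of $E_2(k[x,y])$ to a reduced alternating word, the degree induction when all parameters are non-constant (so the two column degrees are $\sum_{i\le n}\deg c_i$ and $\sum_{i\le n-1}\deg c_i$ and never coincide), and the closure of $E_2$ under transposition are all correct.

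The one point of substance is the ``key claim'' that any $M\in E_2(k[x,y])$ with equal positive column degrees has leading matrix whose column space is spanned by a \emph{constant} vector, and this is exactly where the real content of Cohn's theorem lives. Your stated justification (``the lagging row never attains the column degree'', ``constant operations merely rescale within the fixed direction'') covers the generic cases but glosses over the delicate one: a constant parameter applied when the two columns already have equal degree can cancel the top-degree forms entirely, dropping the column degree and forcing the induction to restart with lower-order terms whose direction is not controlled by the hypothesis. This is the phenomenon behind the relation $E_{12}(\alpha)E_{21}(-\alpha^{-1})E_{12}(\alpha)=\left(\begin{smallmatrix}0&\alpha\\ -\alpha^{-1}&0\end{smallmatrix}\right)$, and Cohn disposes of it by first normalizing the word so that all interior parameters are non-units. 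Without that normalization (or an equivalent treatment of cancellation) the key claim is asserted rather than proved; since you explicitly defer this bookkeeping to Cohn, your proposal sits at the same level of rigor as the paper's own treatment, but it is not a self-contained proof, and the gap is located at precisely the step that makes the lemma nontrivial.
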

Let~$\varphi$ be an endomorphism of~$P_3$. Following the notation in~\cite{Umir07Anick},   we define
\begin{equation}\label{jvar2-def}
J_2(\varphi)=
\begin{pmatrix}
   \pi_3^e(\frac{\partial{\varphi(x_1)}}{\partial\xx_1})  & \pi_3^e(\frac{\partial{\varphi(x_1)}}{\partial\xx_2})\\
      \pi_3^e(\frac{\partial{\varphi(x_2)}}{\partial\xx_1})  & \pi_3^e(\frac{\partial{\varphi(x_2)}}{\partial\xx_2})
\end{pmatrix}.
\end{equation}

\begin{theorem}\label{thmj2}
Let~$\varphi\in \mathsf{Aut}(P_3)$ be an elementary automorphism of the form~$\sigma(i,1,f)$ with $f\in \mathsf{Ker}(\pi_3)$. Then for every  tame automorphism~$\psi\in \mathsf{TA}(P_3)$, we have $\eta^e(J_2(\psi\varphi\psi^{-1}))\in E_{2}(P\{x_3\}^e)$.
\end{theorem}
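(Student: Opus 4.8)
The plan is to induct on the length of a tame $\psi$ as a product of elementary automorphisms, reducing everything to how $\eta^e(J_2(-))$ behaves under conjugation by a single elementary automorphism. For the base case $\psi=\mathsf{id}$ the matrix $\eta^e(J_2(\varphi))$ is read off directly from \eqref{matrix-varphi}: for $\varphi=\sigma(1,1,f)$ it equals $E_{12}(\eta^e\pi_3^e(\frac{\partial f}{\partial x_2}))$, for $\sigma(2,1,f)$ it is an $E_{21}$, and for $\sigma(3,1,f)$ it is the identity matrix, so all three lie in $E_2(P\{x_3\}^e)$. Hence it suffices to prove the reduction statement: if $\gamma\in\mathsf{Aut}(P_3)$ satisfies $\overline{\gamma}=\mathsf{id}$ and $\eta^e(J_2(\gamma))\in E_2(P\{x_3\}^e)$, then $\eta^e(J_2(\tau\gamma\tau^{-1}))\in E_2(P\{x_3\}^e)$ for every elementary $\tau$.

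The algebraic backbone I would set up first is an anti-homomorphism. Put $K=\{\alpha\in\mathsf{Aut}(P_3):\overline{\alpha}=\mathsf{id}\}$. For $\alpha\in K$ we have $\overline{\alpha}^e=\mathsf{id}$, so the chain rule (Lemma~\ref{chain-rule}) collapses to $J(\alpha\beta)=J(\beta)J(\alpha)$; combined with the block-lower-triangular shape of $\eta^e(J(-))$ on $K$ coming from Lemma~\ref{par-ker} (third column $(0,0,1)^{T}$), this shows that $\Phi:=\eta^e(J_2(-))$ restricts to a group anti-homomorphism $\Phi\colon K\to GL_2(P\{x_3\}^e)$, whence $H:=\Phi^{-1}(E_2(P\{x_3\}^e))$ is a subgroup of $K$ containing $\varphi$. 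Now factor an elementary $\tau=\sigma(i,\alpha,g)$, via \eqref{relation1}, as $\tau=\tau_0\tau_1$ with $\tau_1=\sigma(i,1,g_1)\in K$ (where $g_1$ is the Poisson-bracket part of $g$) and $\tau_0=\sigma(i,\alpha,g_0)$ with $g_0$ bracket-free. By the base-case computation $\Phi(\tau_1)\in E_2$, so $\tau_1\in H$, and since $\gamma\in H$ and $H$ is a subgroup, $\tau_1\gamma\tau_1^{-1}\in H$. This disposes of the $\mathsf{Ker}(\pi_3)$-part and reduces the reduction statement to conjugation by a \emph{bracket-free} $\tau_0$.

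For such $\tau_0$ I would use Lemma~\ref{app-chain-rule} in the form $\eta^e(J(\tau_0\gamma\tau_0^{-1}))=\eta^e(J(\tau_0))^{-1}\,(\eta\overline{\tau_0})^{e}(J(\gamma))\,\eta^e(J(\tau_0))$, using that $(-)^e$ is functorial (cf.\ \eqref{indu-e}) to replace $\eta^e\overline{\tau_0}^e$ by $(\eta\overline{\tau_0})^{e}$. When $\tau_0$ modifies $x_3$, the composite $\eta\overline{\tau_0}$ equals $\theta\eta$ for the affine automorphism $\theta$ of $P\{x_3\}$ sending $x_3\mapsto\alpha x_3+c$; therefore the middle factor is $\theta^e$ applied entrywise to the block-triangular matrix $\eta^e(J(\gamma))$, and since $\theta^e$ is an algebra automorphism fixing $0$ and $1$ it keeps the top-left block $\theta^e(\eta^e(J_2(\gamma)))$ in $E_2$. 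As $\eta^e(J(\tau_0))$ has top-left block $I_2$ in this case, multiplying out the resulting block-triangular product leaves the top-left block equal to $\theta^e(\eta^e(J_2(\gamma)))\in E_2$. This settles the bracket-free subcase that touches only the distinguished variable.

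The main obstacle is the remaining case, where $\tau_0$ modifies $x_1$ or $x_2$: then $\eta\overline{\tau_0}$ no longer factors through $\eta$ and $\eta^e(J(\tau_0))$ acquires a nonzero $(1,3)$ entry, so the third row and column are genuinely mixed into the top-left $2\times2$ block (the transpositions $\sigma_{13},\sigma_{23}$ and, after the $E_2$-preserving swap $\sigma_{12}$, the type-$2$ maps all reduce to this case). The cleanest way to see what is at stake is the global identity $\eta^e(J(\psi\varphi\psi^{-1}))=T^{-1}\,(\eta\overline{\psi})^{e}(J(\varphi))\,T$ with $T=\eta^e(J(\psi))$: because $J(\varphi)$ is a single transvection by \eqref{matrix-varphi}, the conjugate is a rank-one update $I_3+u\rho^{T}$ with $\rho^{T}u=0$, and by Lemma~\ref{etajvar-rough} its top-left block has the form $I_2+ab^{T}$ with $b^{T}a=0$ and determinant $1$ --- a priori exactly the shape of the Cohn matrix in Lemma~\ref{matrix-not-ele} that does \emph{not} lie in $E_2$. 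The technical heart, which I expect to be the hardest step, is to prove that for tame $\psi$ the two nonzero Fox-derivative entries of $(\eta\overline{\psi})^{e}(J(\varphi))$ combine with the rows and columns of $T$ so that $I_2+ab^{T}$ nevertheless factors into elementary matrices; I would carry this through the induction one elementary at a time, tracking these explicit entries against the simple one-special-row shape of $\eta^e(J(\tau_0))$ and verifying $E_2$-membership in the surviving cases.
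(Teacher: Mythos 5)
Your base case, your reduction of the bracket part of an elementary $\tau$ via the subgroup $H=\Phi^{-1}(E_2(P\{x_3\}^e))$ of $K=\{\alpha:\overline{\alpha}=\mathsf{id}\}$ (using that $\Phi=\eta^e(J_2(-))$ is an anti-homomorphism on $K$), and your treatment of bracket-free $\tau_0$ touching only $x_3$ are all sound, and the rank-one/determinant analysis via Lemma~\ref{etajvar-rough} correctly identifies why the remaining case is delicate. But the proof is not complete: for conjugation by a bracket-free $\sigma(1,\alpha,g_0)$ (equivalently, after the swap, $\sigma(2,\alpha,g_0)$) you only describe what you \emph{would} do --- ``carry this through the induction one elementary at a time \ldots and verifying $E_2$-membership in the surviving cases'' --- without exhibiting the factorization of the resulting top-left block into elementary matrices. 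Since you yourself observe that a matrix of the a priori shape $I_2+ab^{T}$ with $b^{T}a=0$ and determinant $1$ can be exactly the Cohn matrix of Lemma~\ref{matrix-not-ele}, which is \emph{not} in $E_2$, this is precisely the step where the theorem could fail and where all the real work lies; leaving it as a plan is a genuine gap.

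For comparison, the paper closes this case by an explicit computation. Writing $\eta^e(J(\psi_1))=\left(\begin{smallmatrix}W&G\\0&1\end{smallmatrix}\right)$ and $\eta^e(J(\psi_1)^{-1})=\left(\begin{smallmatrix}Y&Z\\0&1\end{smallmatrix}\right)$ for $\psi_1=\sigma(1,\alpha,g)$, and $\widetilde{\psi_1}^e\eta^e(J(\varphi_1))=\left(\begin{smallmatrix}U&0\\V&1\end{smallmatrix}\right)$ with $U\in E_2$ by induction, it multiplies out the three blocks and then uses Lemma~\ref{etajvar-rough} applied to the \emph{conjugate} $\psi\varphi\psi^{-1}$ to extract the two constraints $V_{1\times2}G_{2\times1}=0$ and $Z_{2\times1}=-Y U G_{2\times1}$; these let one rewrite the top-left block as $YU(E_2-G V)W=YU\,E_{12}(-w_2v_2)\,W$, and the diagonal parts of $Y$ and $W$ are absorbed using the commutation rule $E_{12}(u)\mathrm{diag}(\alpha,1)=\mathrm{diag}(\alpha,1)E_{12}(\alpha^{-1}u)$. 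A further ingredient you did not use, and which keeps the case analysis finite, is the reduction \eqref{sigma1j}--\eqref{sigma2j} to generators of the form $\sigma(1,\alpha,g)$ and $\sigma(j,1,-x_1)$, $j\in\{2,3\}$, so that only one ``hard'' generator type ever occurs. Until you supply the analogue of the $YU(E_2-GV)W$ computation (or an equivalent argument) for your remaining case, the proposal does not establish the theorem.
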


\begin{proof}
Since~$\psi$ is tame, we know that $\psi$ is a product of elementary automorphisms of the form~$\sigma(j,\alpha,g)$.  By Equations~\eqref{sigma1j} and~\eqref{sigma2j}, we may assume that~$\psi$ is a product of elementary automorphisms of the form
\begin{equation}\label{psiform}
  \sigma(1,\alpha, g) \mbox{ or }   \sigma(j,1,-x_1)  \mbox{ with } j\in\{2,3\}.
\end{equation}
So we may assume~$\psi=\psi_1\psi_2\dots \psi_{t}$, where each~$\psi_p$ is of the form in~\eqref{psiform}.

Now we use induction on~$t$ to prove the result.
For~$t=0$, we have~$\psi\varphi\psi^{-1}=\varphi=\sigma(i,1,f)$. By Lemma~\ref{par-ker}, we have~$\eta^e\pi_3^e(\frac{\partial{f}}{\partial\xx_3})=0$. Since~$f$ does not involve~$\xx_i$, we obtain~$\eta^e\pi_3^e(\frac{\partial{f}}{\partial\xx_i})=0$. There are several cases to consider. If~$i=1$, then by~\eqref{jvar2-def}, we have
 $$\eta^e(J_2(\varphi))=
 \begin{pmatrix}
  1 & \eta^e\pi_3^e(\frac{\partial\ff}{\partial\xx_2})\\
      0&1
\end{pmatrix}\in E_{2}(P\{x_3\}^e);$$
If~$i=2$, then we have
 $$\eta^e(J_2(\varphi))=
 \begin{pmatrix}
  1 &0\\
\eta^e\pi_3^e(\frac{\partial\ff}{\partial\xx_1})&1
\end{pmatrix}\in E_{2}(P\{x_3\}^e);$$
If~$i=3$, then we have
 $$\eta^e(J_2(\varphi))=
 \begin{pmatrix}
  1 &0\\
0&1
\end{pmatrix}\in E_{2}(P\{x_3\}^e).$$

Now we assume~$t>0$.  Denote~$\psi_2\dots \psi_t\varphi(\psi_2\dots \psi_t)^{-1}$ by~$\varphi_1$. Then by induction hypothesis and by Lemma~\ref{etajvar-rough}, we may assume
$$\eta^e(J(\varphi_1))
=\begin{pmatrix}
  a_{11} & a_{12} &0\\
  a_{21}& a_{22} & 0\\
  a_{31} &a_{32} &1 \\
\end{pmatrix}
=\begin{pmatrix}
 A_{2\times 2} & 0\\
 B_{1\times 2} & 1
\end{pmatrix}$$
such that~$A_{2\times 2}$ lies in~$ E_{2}(P\{x_3\}^e)$.
Denote by~$\widetilde{\psi_1}$ the automorphism of~$P\{x_3\}$ induced by~$\overline{\psi_1}$. Since the following diagram
$$\xymatrix{
  C_3 \ar[d]_{\eta} \ar[r]^{\overline{\psi_1}}
                &   C_3  \ar[d]^{\eta}  \\
 P\{x_3\} \ar[r]_{\widetilde{\psi_1}}
                &  P\{x_3\}            }
$$
 is commutative, we obtain that
$$
  \eta^{e}\overline{\psi_1}^e=(\eta\overline{\psi_1})^e=(\widetilde{\psi_1}\eta)^e
=\widetilde{\psi_1}^e\eta^e.
$$
Combining the above formula with Lemma~\ref{app-chain-rule}, we have
\begin{equation}\label{psieta}
\eta^e(J(\psi\varphi\psi^{-1}))=\eta^e(J(\psi_1\varphi_1\psi_1^{-1}))
 =\eta^e(J(\psi_1)^{-1})\widetilde{\psi_1}^e\eta^e(J(\varphi_1))\eta^e(J(\psi_1)).
\end{equation}
Assume that~$$\widetilde{\psi_1}^e\eta^e(J(\varphi_1))
=\begin{pmatrix}
\widetilde{\psi_1}^e(A_{2\times 2}) & 0\\
 \widetilde{\psi_1}^e(B_{1\times 2}) & 1
\end{pmatrix}
=\begin{pmatrix}
U_{2\times 2} & 0\\
V_{1\times 2} & 1
\end{pmatrix}.$$
Since~$\widetilde{\psi_1}^e$ maps elementary matrices over~$P\{x_3\}^e$
to elementary matrices over~$P\{x_3\}^e$, we deduce that~$U_{2\times 2}\in E_2(P\{x_3\}^e)$.

If~$\psi_1=\sigma(2,1,-x_1)$, then we have
$$
\eta^e(J(\psi_1))=
\begin{pmatrix}
1&0&0\\
-1&1&0\\
0&0&1
\end{pmatrix}
\mbox{ and }
\eta^e(J(\psi_1)^{-1})=
\begin{pmatrix}
1&0&0\\
1&1&0\\
0&0&1
\end{pmatrix}.$$
By~\eqref{psieta}, we deduce that
\begin{align*}
  \eta^e(J(\psi\varphi\psi^{-1}))
=&\begin{pmatrix}
E_{21}(1)&0 \\
0&1
\end{pmatrix}
\begin{pmatrix}
U_{2\times 2} & 0\\
V_{1\times 2} & 1
\end{pmatrix}
\begin{pmatrix}
E_{21}(-1)&0 \\
0&1
\end{pmatrix}&\\
=&\begin{pmatrix}
E_{21}(1)U_{2\times 2}E_{21}(-1)&0 \\
V_{1\times 2}E_{21}(-1)&1
\end{pmatrix}.&
\end{align*}
So we obtian
$$\eta^e(J_2(\psi\varphi\psi^{-1}))=E_{21}(1)U_{2\times 2}E_{21}(-1) \in E_2(P\{x_3\})^e.
$$

If~$\psi_1=\sigma(3,1,-x_1)$, then we have
$$
\eta^e(J(\psi_1))=
\begin{pmatrix}
1&0&0\\
0&1&0\\
-1&0&1
\end{pmatrix}
\mbox{ and }
\eta^e(J(\psi_1)^{-1})=
\begin{pmatrix}
1&0&0\\
0&1&0\\
1&0&1
\end{pmatrix}.$$
Denote the row matrix~$(-1,0)$ by~$W_{1\times 2}$. Then by~\eqref{psieta}, we deduce that
 \begin{align*}
  \eta^e(J(\psi\varphi\psi^{-1}))
=&\begin{pmatrix}
E_{2}&0 \\
-W_{1\times 2}&1
\end{pmatrix}
\begin{pmatrix}
U_{2\times 2} & 0\\
V_{1\times 2} & 1
\end{pmatrix}
\begin{pmatrix}
E_{2}&0 \\
W_{1\times 2}&1
\end{pmatrix}&\\
=&\begin{pmatrix}
 U_{2\times 2} &0 \\
 -W_{1\times 2}U_{2\times 2}+V_{1\times 2}+W_{1\times 2}&1
\end{pmatrix}.&
\end{align*}
So we obtian
$$\eta^e(J_2(\psi\varphi\psi^{-1}))=U_{2\times 2} \in E_2(P\{x_3\})^e.
$$
Finally, if~$\psi_1=\sigma(1,\alpha, g)$ with~$\alpha\neq 0$, say~$w_1=\eta^e\pi_3^e(\frac{\partial\gg}{\partial\xx_2})$ and~$w_2=\eta^e\pi_3^e(\frac{\partial\gg}{\partial\xx_3})$,
 then we have
$$
\eta^e(J(\psi_1))=
\begin{pmatrix}
\alpha  &   w_1   &     w_2\\
0          &    1     &     0\\
0          &    0     &      1
\end{pmatrix}
=\begin{pmatrix}
  W_{2\times 2}& G_{2\times 1}\\
  0                   &    1
\end{pmatrix}
$$
 and
 $$
\eta^e(J(\psi_1)^{-1})=
\begin{pmatrix}
\alpha^{-1}  &   -\alpha^{-1}w_1   &     -\alpha^{-1}w_2\\
0          &    1     &     0\\
0          &    0     &      1
\end{pmatrix}
=\begin{pmatrix}
  Y_{2\times 2}& Z_{2\times 1}\\
  0                   &    1
\end{pmatrix}.$$
By~\eqref{psieta} again, we deduce that
 \begin{align*}
  \eta^e(J(\psi\varphi\psi^{-1}))
=&\begin{pmatrix}
  Y_{2\times 2}& Z_{2\times 1}\\
  0                   &    1
\end{pmatrix}
\begin{pmatrix}
U_{2\times 2} & 0\\
V_{1\times 2} & 1
\end{pmatrix}
\begin{pmatrix}
  W_{2\times 2}& G_{2\times 1}\\
  0                   &    1
\end{pmatrix}
&\\
=&
\begin{pmatrix}
  Y_{2\times 2}U_{2\times 2}+Z_{2\times 1}V_{1\times 2} &   Z_{2\times 1}\\
  V_{1\times 2}                                                                 &    1
  \end{pmatrix}
  \begin{pmatrix}
  W_{2\times 2}& G_{2\times 1}\\
  0                   &    1
\end{pmatrix}
&\\
=&
\begin{pmatrix}
  Y_{2\times 2}U_{2\times 2}W_{2\times 2}+Z_{2\times 1}V_{1\times 2}W_{2\times 2}  \  &  \  Y_{2\times 2}U_{2\times 2}G_{2\times 1}+Z_{2\times 1}V_{1\times 2}G_{2\times 1}+Z_{2\times 1}\\
  V_{1\times 2}W_{2\times 2}                                                               &    V_{1\times 2}G_{2\times1}+1
  \end{pmatrix}
  &
\end{align*}
Then by Lemma~\ref{etajvar-rough}, we have
$$1=V_{1\times 2}G_{2\times1}+1.$$
and
$$Y_{2\times 2}U_{2\times 2}G_{2\times 1}+Z_{2\times 1}V_{1\times 2}G_{2\times 1}+Z_{2\times 1}=0.$$
Assume~$V_{1\times 2}=(v_1,v_2)$. Then we have
$$v_1w_2=V_{1\times 2}G_{2\times1}=0.$$
And thus
 $$Z_{2\times 1}=-Y_{2\times 2}U_{2\times 2}G_{2\times 1}.$$
 Since~$P\{x_3\}^e$ is commutative, we also have~$w_2v_1=0$.
Therefore, we deduce that
\begin{align*}
 & \eta^e(J_2(\psi\varphi\psi^{-1}))&\\
 =& Y_{2\times 2}U_{2\times 2}W_{2\times 2}+Z_{2\times 1}V_{1\times 2}W_{2\times 2}&\\
  =& Y_{2\times 2}U_{2\times 2}W_{2\times 2}-Y_{2\times 2}U_{2\times 2}G_{2\times 1}V_{1\times 2}W_{2\times 2}&\\
  =& Y_{2\times 2}U_{2\times 2}(E_2-G_{2\times 1}V_{1\times 2})W_{2\times 2}.& \\
  =& Y_{2\times 2}U_{2\times 2}E_{12}(-w_2v_2)W_{2\times 2}& \\
    =&
    \begin{pmatrix}
      \alpha^{-1} & 0\\
      0 & 1
    \end{pmatrix}
    E_{12}(-w_1)U_{2\times 2}E_{12}(-w_2v_2)E_{12}(w_1)
    \begin{pmatrix}
      \alpha & 0\\
      0& 1
    \end{pmatrix}.&
\end{align*}
Note that for all~$u \in P\{x_3\}^e$,  we have~\cite{cohn-matrix,Umir07Anick}
$$E_{12}(u) \begin{pmatrix}
      \alpha & 0\\
      0& 1
    \end{pmatrix}
    =\begin{pmatrix}
      \alpha & 0\\
      0& 1
    \end{pmatrix}E_{12}(\alpha^{-1}u)
    \mbox{ and }
    E_{21}(u) \begin{pmatrix}
      \alpha & 0\\
      0& 1
    \end{pmatrix}
    =\begin{pmatrix}
      \alpha & 0\\
      0& 1
    \end{pmatrix}E_{21}(\alpha\uu). $$
By the above reasoning, we obtain
$$ \eta^e(J_2(\psi\varphi\psi^{-1}))\in \begin{pmatrix}
      \alpha^{-1} & 0\\
      0 & 1
    \end{pmatrix} \begin{pmatrix}
      \alpha & 0\\
      0& 1
    \end{pmatrix}E_{2}(P\{x_3\}^e)
= E_{2}(P\{x_3\}^e).    $$
The proof is completed.
\end{proof}

\begin{coro}\label{tame-j2}
For~$\varphi\in \mathsf{TA}(P_3)$, if~$\overline{\varphi}=\mathsf{id}$, then~$\eta^e(J_2(\varphi))\in E_2(P\{x_3\}^e)$.
\end{coro}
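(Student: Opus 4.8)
The plan is to reduce to Theorem~\ref{thmj2} by decomposing $\varphi$ into conjugates of elementary automorphisms lying in $\mathsf{Ker}(\pi_3)$, and then to show that $\eta^e(J_2(-))$ carries such a product into a product of elementary matrices. Since $\overline{\varphi}=\mathsf{id}$, the automorphism $\varphi$ lies in the kernel of the homomorphism $\Pi$ of Lemma~\ref{kernel-aut}. That lemma identifies $\mathsf{Ker}(\Pi)$ as the normal subgroup generated by the elementary automorphisms $\sigma(i,1,f)$ with $f\in\mathsf{Ker}(\pi_3)$; as $\sigma(i,1,f)^{-1}=\sigma(i,1,-f)$ is again of this form, I can write
$$\varphi=g_1g_2\cdots g_m,\qquad g_s=\psi_s\varphi_s\psi_s^{-1},$$
where each $\varphi_s=\sigma(i_s,1,f_s)$ with $f_s\in\mathsf{Ker}(\pi_3)$ and each $\psi_s\in\mathsf{TA}(P_3)$. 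Each factor satisfies $\overline{g_s}=\overline{\psi_s}\,\overline{\varphi_s}\,\overline{\psi_s}^{-1}=\mathsf{id}$.

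Next I would make $\eta^e(J(-))$ multiplicative on these factors via the chain rule. Because $\overline{g_s}=\mathsf{id}$ forces $\overline{g_s}^e=\mathsf{id}$, Lemma~\ref{chain-rule} collapses to $J(g_s h)=J(h)J(g_s)$ whenever $\overline{g_s}=\mathsf{id}$, and a short induction gives $J(\varphi)=J(g_m)J(g_{m-1})\cdots J(g_1)$. Applying the algebra homomorphism $\eta^e$ entrywise yields
$$\eta^e(J(\varphi))=\eta^e(J(g_m))\cdots\eta^e(J(g_1)).$$
By Lemma~\ref{etajvar-rough}, each $\eta^e(J(g_s))$ is block lower triangular with third column $(0,0,1)^T$, say $\begin{pmatrix}A_s&0\\ B_s&1\end{pmatrix}$ with $A_s\in M_{2\times 2}(P\{x_3\}^e)$. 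Matrices of this shape form a multiplicative set, and the upper-left $2\times 2$ block of their product is $A_mA_{m-1}\cdots A_1$. Since $A_s$ is exactly $\eta^e(J_2(g_s))$, this gives $\eta^e(J_2(\varphi))=\eta^e(J_2(g_m))\cdots\eta^e(J_2(g_1))$.

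To conclude, each factor $\eta^e(J_2(g_s))=\eta^e(J_2(\psi_s\varphi_s\psi_s^{-1}))$ lies in $E_2(P\{x_3\}^e)$ by Theorem~\ref{thmj2}, and $E_2(P\{x_3\}^e)$ is a subgroup of $GL_2(P\{x_3\}^e)$ and hence closed under products, so $\eta^e(J_2(\varphi))\in E_2(P\{x_3\}^e)$. The only delicate point is the bookkeeping in the middle step: I must confirm that $\overline{g_s}=\mathsf{id}$ genuinely removes the twist $\overline{g_s}^e$ in the chain rule, so that $\eta^e\circ J$ becomes multiplicative (up to order reversal) on the factors, and that the $2+1$ block structure from Lemma~\ref{etajvar-rough} is preserved under multiplication, so the bottom rows $B_s$ never feed into the upper-left block. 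This is precisely what localizes the problem to the $2\times 2$ blocks already handled by Theorem~\ref{thmj2}.
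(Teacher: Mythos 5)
Your proposal is correct and follows essentially the same route as the paper's own proof: decompose $\varphi$ into conjugates $\psi_s\varphi_s\psi_s^{-1}$ via Lemma~\ref{kernel-aut}, use the chain rule (with the twist $\overline{g_s}^e=\mathsf{id}$ removed) to make $\eta^e(J(-))$ multiplicative in reversed order, and then use the block structure from Lemma~\ref{etajvar-rough} together with Theorem~\ref{thmj2} and the fact that $E_2(P\{x_3\}^e)$ is a group. The only cosmetic difference is that you spell out the closure of the generating set under inverses, which the paper leaves implicit.
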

\begin{proof}
  By Lemma~\ref{kernel-aut}, we may assume that
  $\varphi=\psi_1\varphi_1\psi_1^{-1}...\psi_t\varphi_t\psi_t^{-1}$ for some~$t\geq 1$, where each~$\varphi_j$ is an elementary automorphism of the form~\eqref{AutInKer} with~$n=3$.
  Define~$\theta_j=\psi_j\varphi_j\psi_j^{-1}$ for each~$1\leq j\leq t$.  Then~$\overline{\theta_j}=\mathsf{id}$. By Lemma~\ref{chain-rule} and by induction on~$t$, we have
 $$J(\varphi)=J(\theta_1\dots \theta_t)
 =\overline{\theta_1\dots \theta_{t-1}}^e(J(\theta_t))J(\theta_1\dots \theta_{t-1})
 =J(\theta_t)J(\theta_1\dots \theta_{t-1})
  =J(\theta_t)\dots J(\theta_1).$$
  By Lemma~\ref{etajvar-rough} and by Theorem~\ref{thmj2}, we may assume
  $$\eta^e(J(\theta_j))=
\begin{pmatrix}
  A_j &0\\
V_j & 1\\
 \end{pmatrix}$$
such that~$A_j\in E_2(P\{x_3\}^e)$. By induction on~$t$, it is straightforward to show that
$$\eta^e(J(\theta_1\dots \theta_t))
=\eta^e(J(\theta_t))\dots \eta^e(J(\theta_1))
=\begin{pmatrix}
  A_tA_{t-1}\dots A_1 & 0\\
  V& 1
\end{pmatrix}.
 $$
The result follows immediately.
\end{proof}
Now we are ready to prove our main result. Since~$\overline{\delta}\neq \mathsf{id}$, we need to construct a tame automorphism~$\psi$ such that~$\overline{\delta\psi}=\mathsf{id}$.
\begin{theorem}\label{thm-wild}
  The Anick type automorphism~$\delta$ in~\eqref{Anick-auto} is wild.
\end{theorem}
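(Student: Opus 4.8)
The plan is to argue by contradiction using Corollary~\ref{tame-j2}. First I compute $\overline{\delta}$: since $\pi_3$ kills every Poisson bracket, the bracket in the first coordinate vanishes and $(x_1x_3-[x_3,x_2])x_3$ projects to $x_1x_3^2$, so $\overline{\delta}=(x_1,x_2+x_1x_3^2,x_3)=\sigma(2,1,x_1x_3^2)$ is an elementary (hence tame) automorphism of $C_3$. I therefore set $\psi:=\sigma(2,1,-x_1x_3^2)\in\mathsf{TA}(P_3)$, with $x_1x_3^2$ read as a bracket-free element of the subalgebra of $P_3$ generated by $\{x_1,x_3\}$; by relation~\eqref{relation1} we get $\overline{\delta\psi}=\overline{\delta}\,\overline{\psi}=\mathsf{id}$. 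Now suppose, for contradiction, that $\delta$ is tame. Then $\delta\psi\in\mathsf{TA}(P_3)$ satisfies $\overline{\delta\psi}=\mathsf{id}$, so Corollary~\ref{tame-j2} yields $\eta^e(J_2(\delta\psi))\in E_2(P\{x_3\}^e)$; the whole point is to contradict this.

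To evaluate $\eta^e(J_2(\delta\psi))$ I use the chain rule (Lemma~\ref{chain-rule}): $J(\delta\psi)=\overline{\delta}^e(J(\psi))J(\delta)$. The key simplification is that $\eta\overline{\delta}=\eta$ as maps $C_3\to P\{x_3\}$ --- because $\overline{\delta}$ changes only $x_2$, by adding $x_1x_3^2$, which $\eta$ annihilates --- and hence $\eta^e\overline{\delta}^e=(\eta\overline{\delta})^e=\eta^e$. Applying $\eta^e$ gives $\eta^e(J(\delta\psi))=\eta^e(J(\psi))\,\eta^e(J(\delta))$. Since both $\psi$ and $\delta$ fix $x_3$, the matrices $\eta^e(J(\psi))$ and $\eta^e(J(\delta))$ are block lower-triangular with third column $(0,0,1)^T$, so their top-left $2\times2$ blocks multiply: $\eta^e(J_2(\delta\psi))=\eta^e(J_2(\psi))\,\eta^e(J_2(\delta))$.

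It then remains to compute the two $2\times2$ factors. Using Corollary~\ref{c1e} (so $P\{x_3\}^e=k[M_{x_3},H_{x_3}]$) together with the fact that $\eta^e\pi_3^e$ annihilates $M_{x_1},M_{x_2},H_{x_1},H_{x_2}$ and every bracket (Lemma~\ref{par-ker} taking care of the $x_3$-derivatives of $\mathsf{Ker}(\pi_3)$), the Fox-derivative computation should give
$$\eta^e(J_2(\delta))=\begin{pmatrix}1+M_{x_3}H_{x_3}&-H_{x_3}^2\\ M_{x_3}^2&1-M_{x_3}H_{x_3}\end{pmatrix},\qquad \eta^e(J_2(\psi))=E_{21}(-M_{x_3}^2).$$
The first matrix is precisely the transpose Cohn matrix of Lemma~\ref{matrix-not-ele} with $x=H_{x_3}$ and $y=M_{x_3}$, so it does not lie in $E_2(P\{x_3\}^e)$. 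Because $\eta^e(J_2(\psi))=E_{21}(-M_{x_3}^2)$ is elementary, left-multiplying $\eta^e(J_2(\delta\psi))$ by $E_{21}(M_{x_3}^2)$ recovers this Cohn matrix; thus $\eta^e(J_2(\delta\psi))\in E_2(P\{x_3\}^e)$ would force the Cohn matrix into $E_2(P\{x_3\}^e)$, contradicting Lemma~\ref{matrix-not-ele}. Hence $\delta$ is wild.

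The main obstacle is the Fox-derivative computation of $\eta^e(J_2(\delta))$. One has to carry the derivatives out in the noncommutative algebra $P_3^e$, project to the commutative $C_3^e$ via $\pi_3^e$, and finally apply $\eta^e$, checking that the surviving terms collapse exactly to $\pm M_{x_3}^2$ and $\pm H_{x_3}^2$ off the diagonal and to $1\pm M_{x_3}H_{x_3}$ on it. Keeping the signs straight in $\frac{\partial[x_3,u]}{\partial x_j}$ and $\frac{\partial(ux_3)}{\partial x_j}$, where $u=x_1x_3-[x_3,x_2]$, and confirming the cancellations that kill the $x_3$-column, is the delicate part; once $\eta^e(J_2(\delta))$ is identified as Cohn's matrix, the rest is formal.
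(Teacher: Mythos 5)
Your proposal is correct and follows essentially the same route as the paper: the same corrector $\psi=(x_1,x_2-x_1x_3^2,x_3)$, the same appeal to Corollary~\ref{tame-j2}, the same chain-rule factorization $\eta^e(J_2(\delta\psi))=\eta^e(J_2(\psi))\,\eta^e(J_2(\delta))$ with $\eta^e(J_2(\delta))$ equal to the transposed Cohn matrix of Lemma~\ref{matrix-not-ele} in $M_{x_3},H_{x_3}$, and the same conclusion that an elementary factor cannot rescue membership in $E_2(P\{x_3\}^e)$. The matrices you predict for $\eta^e(J_2(\delta))$ and $\eta^e(J_2(\psi))$ agree with the paper's computation, so the remaining Fox-derivative verification you flag goes through exactly as you describe.
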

\begin{proof} Clearly, we have~$\overline{\delta}=(x_1, x_2+x_1x_3^2,x_3)$.
Define~$\psi=(x_1, x_2-x_1x_3^2,x_3)\in \mathsf{TA}(P_3)$. Then we have $\overline{\delta\psi}=\bar{\delta}\bar{\psi}=\mathsf{id}$.
Suppose that~$\delta$ is tame. Then by Corollary~\ref{tame-j2}, we have
$$\eta^e(J_2(\delta\psi))\in E_2(P\{x_3\}^e).$$
On the other hand, by Lemma~\ref{chain-rule}, we obtain
$$\eta^e(J(\delta\psi))=\eta^e\overline{\delta}^e(J(\psi))\eta^e(J(\delta))
=\begin{pmatrix}
  1&0&0\\
  -M_{x_3}^2 &1 &0\\
  0&0&1
\end{pmatrix}
\begin{pmatrix}
  1+M_{x_3}H_{x_3} & -H_{x_3}^2&0\\
  M_{x_3}^2 & 1-M_{x_3}H_{x_3} &0\\
  0&0&1
\end{pmatrix}
$$
So by Lemma~\ref{matrix-not-ele}, we deduce a contradiction that
$$\eta^e(J_2(\delta\psi))
=\begin{pmatrix}
  1&0\\
  -M_{x_3}^2 &1
\end{pmatrix}
\begin{pmatrix}
  1+M_{x_3}H_{x_3} & -H_{x_3}^2\\
  M_{x_3}^2 & 1-M_{x_3}H_{x_3} \\
\end{pmatrix}
\notin E_{2}(P\{x_3\}^e).
$$
Therefore, $\delta$ is wild.
\end{proof}

We conclude the article with the observation that, as for the case of a 3-generated free associative algebra~\cite{DY05},  the Anick type automorphism~$\delta$ is stably tame.  
Suppose that~$n=4$ and extend the Anick type automorphism~$\delta$ to
$$\varphi=(\delta(x_1),\delta(x_2),x_3,x_4).$$
Then we have~$\varphi=\varphi_8\dots\varphi_1$, where
$\varphi_1=\sigma(2,1,x_3x_4)$, $\varphi_2=\sigma(1,1,[x_3,x_4])$, $\varphi_3=\sigma(4,1,-[x_3,x_2])$, $\varphi_4=\sigma(4,1, x_1x_3)$, $\varphi_5=\sigma(2,1, -x_3x_4)$, $\varphi_6=\sigma(1, 1, -[x_3,x_4])$, $\varphi_7=\sigma(4,1,[x_3,x_2])$, $\varphi_8=\sigma(4, 1,-x_1x_3)$.

 For the convenience of the readers, we list the images below:

\ITEM1 $x_1\xrightarrow{\varphi_1} x_1\xrightarrow{\varphi_2} x_1+[x_3,x_4] \xrightarrow{\varphi_3}
 x_1+[x_3,x_4]-[x_3,[x_3,x_2]]\xrightarrow{\varphi_4}x_1+[x_3,x_4]+x_3[x_3,x_1]-[x_3,[x_3,x_2]]
 \xrightarrow{\varphi_5}x_1+[x_3,x_4]+x_3[x_3,x_1]-[x_3,[x_3,x_2]]+x_3[x_3,[x_3,x_4]
 \xrightarrow{\varphi_6} x_1+x_3[x_3,x_1]-[x_3,[x_3,x_2]]\xrightarrow{\varphi_8\varphi_7}x_1+x_3[x_3,x_1]-[x_3,[x_3,x_2]]
 =\delta(x_1)$;

 \ITEM2 $x_2\xrightarrow{\varphi_1} x_2+x_3x_4\xrightarrow{\varphi_2}x_2+x_3x_4
 \xrightarrow{\varphi_3} x_2+x_3x_4-x_3[x_3,x_2]\xrightarrow{\varphi_4}x_2+x_3x_4+x_1x_3^2-x_3[x_3,x_2]
 \xrightarrow{\varphi_5}
 x_2+x_1x_3^2-x_3[x_3,x_2]+x_3^2[x_3,x_4]
 \xrightarrow{\varphi_6}
 x_2+x_1x_3^2-x_3[x_3,x_2]
  \xrightarrow{\varphi_8\varphi_7} x_2+x_1x_3^2-x_3[x_3,x_2]=\delta(x_2)$;

  \ITEM3 $x_3\xrightarrow{\varphi}x_3 $;

  \ITEM4  $x_4\xrightarrow{\varphi_2\varphi_1} x_4\xrightarrow{\varphi_4\varphi_3}x_4+x_1x_3-[x_3,x_2]
  \xrightarrow{\varphi_5}x_4+x_1x_3-[x_3,x_2]+x_3[x_3,x_4]
  \xrightarrow{\varphi_6}x_4+x_1x_3-[x_3,x_2]
  \xrightarrow{\varphi_7}x_4+x_1x_3\xrightarrow{\varphi_8} x_4$.

\subsection*{Disclosure statement}
No potential conflict of interest was reported by the authors.

\subsection*{Acknowledgement} The authors would like to thank Yu Li, Yanhua Wang, Yongjun Xu, James J. Zhang, and Xiangui Zhao for their valuable discussions and suggestions.

\newcommand{\noopsort}[1]{}

\end{document}